\newtheorem{theorem}{Theorem}[section]
\newtheorem{lemma}[theorem]{Lemma}
\newtheorem{definition}[theorem]{Definition}
\newtheorem{remark}[theorem]{Remark}
\newtheorem{assumption}[theorem]{Assumption}
\tikzset{
	main/.style={circle, minimum size = 5mm, thick, draw =black!80, node distance = 10mm},
	connect/.style={-latex, thick},
	box/.style={rectangle, draw=black!100}
}
\newcommand*\diff{\mathop{}\!\mathrm{d}}
\DeclarePairedDelimiter\floor{\lfloor}{\rfloor}
\newcommand{\bk}{\boldsymbol{k}}
\newcommand{\bb}{\boldsymbol{b}}
\newcommand{\E}{\mathbb{E}}
\newcommand{\R}{\mathbb{R}}
\newcommand{\Var}{\mathrm{Var}}
\newcommand\norm[1]{\left\lVert#1\right\rVert}
\newcommand{\id}{\mathrm{id}}
\DeclareMathOperator*{\argmin}{argmin}
\begin{document}

\begin{frontmatter}

\title{{\large Transportation of Measure Regression in Higher Dimensions}}

\runtitle{Transportation of Measure Regression in Higher Dimensions}

\begin{aug}
\author{\fnms{Laya} \snm{Ghodrati} \quad \ead[label=e1]{laya.ghodrati@epfl.ch}} \and \quad
\author{\fnms{Victor M.} \snm{Panaretos}\ead[label=e2]{victor.panaretos@epfl.ch} \\ \texttt{laya.ghodrati@epfl.ch} \quad\,\, \texttt{victor.panaretos@epfl.ch}}


\runauthor{L. Ghodrati \& V.M. Panaretos}

\affiliation{Ecole Polytechnique F\'ed\'erale de Lausanne}

\address{Institut de Math\'ematiques\\
Ecole Polytechnique F\'ed\'erale de Lausanne}

\end{aug}

\begin{abstract} 
	We present an optimal transport framework for performing regression when both the covariate and the response are probability distributions on a compact Euclidean subset $\Omega\subset\mathbb{R}^d$, where $d>1$. Extending beyond compactly supported distributions, this method also applies when both the predictor and responses are Gaussian distributions on $\mathbb{R}^d$. Our approach generalizes an existing transportation-based regression model to higher dimensions. This model postulates that the conditional Fr\'echet mean of the response distribution is linked to the covariate distribution via an optimal transport map. We establish an upper bound for the rate of convergence of a plug-in estimator.
	We propose an iterative algorithm for computing the estimator, which is based on DC (Difference of Convex Functions) Programming. In the Gaussian case, the estimator achieves a parametric rate of convergence, and the computation of the estimator simplifies to a finite-dimensional optimization over positive definite matrices, allowing for an efficient solution. The performance of the estimator is demonstrated in a simulation study.
\end{abstract}

\begin{keyword}[class=AMS]
\kwd[Primary ]{62R20, 62R10}
\kwd[; secondary ]{60G57}
\end{keyword}

\begin{keyword}
\kwd{distributional regression}
\kwd{Fr\'echet mean}
\kwd{Wasserstein space}
\end{keyword}

\end{frontmatter}

\section{Introduction}\label{intro}

Data sets that can be modeled as collections of probability distributions are increasingly appearing in modern applications \citep{schiebinger2019optimal,cazelles2017log,roding2016power,rabin2014adaptive}. This calls for the development of statistical theory and methods in sample/parameter spaces comprising spaces of \textit{measures}. These spaces present with significant challenges for statistical inference, as they are neither finite-dimensional nor linear. The Wasserstein space, in particular, has arisen as a canonical choice of ambient space for such analyses. It exhibits interesting properties that to some extent are analogous to what one has in Hilbert spaces. In that sense it lends itself for statistical methodology, particularly for regression analysis with distributional covariates and responses. For example, Chen et al. \cite{chen2021wasserstein} and Zhang et al. \cite{zhang2022wasserstein} use the tangent structure of the Wasserstein space to develop a tangential Hilbert-type linear model.  \citet{ghodrati2022distribution}, on the other hand, use a shape-constraint approach to define regression via monotone transportation of mass. However, thus far, distributional regression in the Wasserstein space has been confined to measures on the real line. This is  due to the geometrical, computational, and statistical complexities associated with higher dimensions: The absence of closed-form solutions for optimal transport maps, the positive curvature of the space, and the curse of dimensionality are among the obstacles encountered. Despite these challenges, and at least in principle, both models have the potential to be studied in higher dimensions\footnote{Indeed, shortly after our own preprint appeared, \citet{okano2023distribution} also posted a preprint studying a distributional regression model based  on the tangent structure of Wasserstein spaces, but restricted to the special case where the predictor and response are Gaussian distributions on $\R^d$. Compare to our Section \ref{Gaussian Case}.}. 

In this paper, we consider the distributional regression problem in higher dimensions, focussing on the monotone map approach, in light of its leaner technical assumptions and greater statistical interpretability. We incorporate and adapt concepts and techniques from prior studies in higher dimensional statistical optimal transport. Specifically, we employ strategies that address the curse of dimensionality in estimating optimal transport maps by imposing regularity conditions borrowed from \citet{gunsilius2022convergence} and \citet{hutter2019minimax}. And, we draw from previous work on imposing geometric and shape constraints to derive the rate of convergence of Fr\'echet mean (namely Ahidar-Coutrix et al. \cite{ahidar2020convergence} and \citet{chewi2020gradient}). We thus establish identifiability of the monotone map model in higher dimensions, introduce a regularised Fr\'echet-least-squares estimator, and establish its consistency and rate of convergence.{ Given that the estimator is formulated as a minimizer of an objective function, expressible as the difference between two convex functionals over a specific space of optimal maps, we propose an iterative algorithm for its computation using DC (Difference of Convex Functions) Programming.}

{In the case where both predictors and responses are Gaussian distributions, the estimator achieves a parametric convergence rate. Here, the existence of a closed-form solution for optimal transport maps between Gaussian distributions simplifies the objective function for estimator minimization. Consequently, the general iterative DC algorithm for computing the estimator is reduced to DC Programming over positive definite matrices, enhancing computational efficiency. The estimator's performance is further illustrated through a simulation study.}

\section{Background on Optimal Transport and Some Notation}{\label{Wasserstein}}
In this section, we provide some background on optimal transport, and also introduce some notation. For a more comprehensive background see, e.g. \cite{panaretos2020invitation}.  Let $\Omega\subseteq\R^d$ and $\mathcal{W}_2(\Omega)$ be the set of Borel probability measures on $\Omega$ with finite second moment. Let $\mathcal{W}_{2,\mathrm{ac}}(\Omega)$ denote the subset of measures in $\mathcal{W}_2(\Omega)$ that are absolutely continuous with respect to the Lebesgue measure. For two measures, $\mu,\nu \in \mathcal{W}_2(\Omega)$, let $\Gamma(\nu,\mu)$ be the set of couplings of $\mu$ and $\nu$, i.e. the set of Borel probability measures on $\Omega \times \Omega$ with marginals $\nu$ and $\mu$. 
 The 2-Wasserstein distance $d^2_{\mathcal{W}}$ between $\mu,\nu \in \mathcal{W}_2(\Omega)$ is defined as 
\begin{equation}{\label{wdist}}
	d^2_{\mathcal{W}}(\nu,\mu):=\underset{\gamma \in \Gamma(\nu,\mu)}{\inf} \int_{\Omega} \norm{x-y}^2 \diff \gamma(x,y).
\end{equation}
It can be shown that $\mathcal{W}_2(\Omega)$ endowed with $d_{\mathcal{W}}$ is a metric space, which we simply call the Wasserstein space of distributions.

A coupling $\gamma$ is deterministic if it is the joint distribution of $\{X, T(X)\}$ for some deterministic map $T: \Omega \to \Omega$. In such a case, we write $\nu=T\#\mu$ and say that $T$ pushes $\mu$ forward to $\nu$, i.e. $\nu(B)=\mu\{T^{-1}(B)\}$ for any Borel set $B$. Brenier's theorem states that when the source distribution $\mu$ is absolutely continuous with respect to the Lebesgue measure, then there is a unique coupling that achieves the minimum in \eqref{wdist} and is a deterministic coupling induced by a map $T$ which is $\mu$-almost surely the gradient of a convex function $\varphi:\R^d\to \R$, i.e. $T=\nabla \varphi$. The map $T$ and the convex function $\varphi$ are correspondingly termed as the optimal transport map and the Kantorovich potential between the distributions $\mu$ and $\nu$.

 A notion of average of probability distributions can be defined via the Fr\'echet mean with respect to the Wasserstein metric. Let $\Lambda$ be a random measure in $\mathcal{W}_2(\mathbb{R}^d)$, and denote the law of $\Lambda$ as $P$. A Fr\'echet mean (or barycenter) of $\Lambda$ is a minimizer of the Fr\'echet functional
\begin{equation}{\label{frechet-functional}}
F(b)=\frac{1}{2}E d^2_{\mathcal{W}}(b,\Lambda)=\frac{1}{2}\int_{\mathcal{W}_2(\Omega)}  d^2_{\mathcal{W}}(b,\lambda)\diff P(\lambda)\quad b \in \mathcal{W}_2(\mathbb{R}^d).
\end{equation}
If $\Lambda$ has a finite Fr\'echet functional and is absolutely continuous with positive probability, the Fr\'echet mean exists and is unique. 

In this paper, we occasionally use the fact that for any $\mu,\nu, b \in \mathcal{W}_2(\R^d)$ and optimal maps $T_1,T_2$ such that $T_1\#b = \mu$ and $T_2\#b=\nu$ we have:
\begin{equation}\label{PC-d}
d_{\mathcal{W}}(\mu,\nu)\leq\norm{T_1 - T_2}_{L^2(b)}.  
\end{equation}
To see this, recall the definition of the Wasserstein space and note that $(T_1,T_2)\#b$ is a coupling of $\mu$ and $\nu$. The inequality becomes equality whenever $\mu$ and $\nu$ are equal.

\paragraph{Notation.} We use the notation $a \lesssim b$ to indicate that there exists a positive constant $C$ for which $a\leq C b$ holds. We use $\text{diam}(A)$ to denote the diameter of a subset $A \subset \R^d$. We denote the topological interior of a set $A$ by $A^0$. The Euclidean norm of a vector in $\R^d$ is denoted by $\norm{\cdot}$. Given a measure $\mu$,  the $L^p$ norm of a function $f$ with respect to $\mu$ is written as $\norm{f}_{L^p(\mu)}$.
We use the symbol $\nabla f$ for the gradient of a function $f$, and $Df$ for the derivative. 
A multi-index $\bk$ is a vector with integer coordinates $(k_1,\cdots,k_d)$. We write $|\bk|=\sum_{i=1}^d k_i$. For a given multi-index $\bk= (k_1,\cdots,k_d)$, we define the differential operator
$$D^{\bk}=\frac{\partial^{|\bk|}}{\partial x_1^{k_1}\cdots \partial x_d^{k_d}}.$$
The class of H\"older functions of smoothness $\beta>0$ on a set $\Omega$, whose precise definition is recalled later, is denoted by $C^{\beta}(\Omega)$.
The class of continuous maps on $\Omega$ is denoted by $C(\Omega)$. The \textit{identity map} on $\mathbb{R}^d$ will be denoted as $\id(x) = I x = x$, where $I$ is the identity \textit{matrix} on $\mathbb{R}^d$.
Occasionally, we denote by $T_{\mu\to\nu}$ the optimal map between $\mu$ and $\nu$.

\section{The Model and its Identifiability}{\label{The Model and The Identifiability}}

{Let $\Omega \subseteq\R^d$ be convex with a non-empty interior. For the results in this section, $\Omega$ need not be compact.} Let $(\mu,\nu)$ be a pair of random elements in $\mathcal{W}_2(\Omega) \times \mathcal{W}_2(\Omega)$ with a joint distribution denoted by $P$. The \textit{regression operator}, $\Gamma:\mathcal{W}_2(\Omega) \to \mathcal{W}_2(\Omega)$, is the minimizer of the conditional Fr\'echet functional, viewed as a function of $\mu$:
$$
\argmin_b \int_{\mathcal{W}_2(\Omega)} d^2_{\mathcal{W}}(b,\nu)\diff P(\nu\,|\,\mu)=\Gamma(\mu). 
$$
It is implicitly assumed that the Fr\'echet mean of the conditional probability distribution $P(\cdot \,|\, \mu)$ of $\nu$ given $\mu$ is unique for any $\mu$. This uniqueness can be ensured through suitable regularity assumptions on the pair $(\mu,\nu)$. A regression model consists in positing a specific structure for $\Gamma(\mu)$. The identifiability of such a model will typically require additional assumptions on the pair $(\mu,\nu)$. 

We consider generalizing the regression model of \citet{ghodrati2022distribution}, which is defined for distributions on $\R$: We assume that $\Gamma(\mu)=T_0\#\mu$, where $T_0$ is an optimal map, and the response distribution $\nu$ further deviates from its conditional Fr\'echet mean $T_0\#\mu$ by means of a random optimal map perturbation (with Bochner mean identity). 

More explicitly, we consider the regression model
\begin{equation}{\label{model-d}}
   \nu_{i}=T_{\epsilon_i}\#(T_0\#\mu_i),  \quad  \{\mu_i,\nu_i\}_{i=1}^N,
\end{equation}
{where $\{\mu_i,\nu_i\}_{i=1}^N$ are an independent collection of regressor/response pairs in $\mathcal{W}_2(\Omega)\times \mathcal{W}_2(\Omega)$}, $T_0:\Omega \to \Omega $ is an unknown transport map and the $\{T_{\epsilon_i}\}_{i=1}^{N}$ are independent and identically distributed random optimal transport maps with $\E(T_{\epsilon_i})=\id$, representing the noise in our model. The regression task is to estimate the unknown map $T_0$ from the observations $\{\mu_i,\nu_i\}_{i=1}^N$.

Let $P$ denote the joint distribution on $\mathcal{W}_2(\Omega) \times \mathcal{W}_2(\Omega)$ induced by model \eqref{model-d}. We denote by $P_{M}$ the induced marginal distribution of $\mu$ and will be assuming that it is supported on regular measures:  

\begin{assumption}{\label{assumptionMu}}
Let $\mu$ be a measure in the support of $P_M$. Then $\mu$ is absolutely continuous with respect to the Lebesgue measure.
\end{assumption}
\noindent The linear average (Bochner mean) of $P_M$, will be denoted as
$$Q(A)=\int_{\mathcal{W}_2(\Omega)} \mu(A)\diff P_M(\mu).$$
Recall that a twice differentiable function $\varphi: \mathbb{R}^d \to \mathbb{R}$ is $\alpha$-strongly convex and $L$-smooth if at any point $x\in \R^d$, its Hessian matrix is positive definite, and its eigenvalues are at least $\alpha$ and at most $L$:
$$\alpha I \preceq \nabla^2\varphi(x) \preceq L I, \quad \forall x\in \R^d,$$ where $A\preceq B$ signifies that the difference $B-A$ between two matrices is positive semi-definite.

Consider the following sets of potential functions:
$$ \Phi_\alpha:=\{\varphi: \text{ such that }\alpha I \preceq \nabla^2\varphi(x) \preceq L I \} \quad \text{ for } L>\alpha\geq 0,$$
$$\Phi:= \cup_{\alpha>0} \Phi_\alpha.$$
Throughout the paper, we suppose $L$ is fixed and known.
Next, we define the following set of optimal maps:
$$\mathcal{T}:=\{T:\Omega \to \Omega : T= \nabla \varphi, \text{ for some } \varphi \in \Phi \}.$$

\noindent We will require the following regularity of the optimal maps involved in the model, in order to ensure idenfitiability:
\begin{assumption}{\label{assumptionT}}
	The map $T_0$ belongs to the class  $\mathcal{T}$.
\end{assumption}

\begin{assumption}{\label{noise_map}}
	The maps $T_{\epsilon_i}$ are i.i.d random elements in
$\mathcal{T}$,  with $\E (T_{\epsilon_i}) = \id$.
\end{assumption}

 \noindent With these assumptions and definitions in place, we can now establish identifiability:

\begin{theorem}{\label{identifiability-d}}
(Identifiability)
Assume that the law $P$ induced by model \eqref{model-d} satisfies Assumptions \ref{assumptionMu}, \ref{assumptionT}, and \ref{noise_map}. Then, the regressor operator $\Gamma(\mu)=T_0\#\mu$ in the model \eqref{model-d} is identifiable over the class of maps $T \in \mathcal{T}$, up to $Q$-null sets. Specifically,  for any map $T \in \mathcal{T}$ such that $\norm{T-T_0}_{L^2(Q)}>0$, it holds that
$$M(T)>M(T_0),$$  
where for any $T\in \mathcal{T}$,
\begin{equation}{\label{population-functional-d}}
 M(T):= \frac{1}{2}\int_{\mathcal{W}_2(\Omega)\times \mathcal{W}_2(\Omega)} d^2_{\mathcal{W}}(T\# \mu,\nu) \diff P(\mu,\nu).
 \end{equation}

\end{theorem}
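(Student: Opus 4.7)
The plan is to derive the theorem from the stronger pointwise claim that, for $P_M$-a.e.\ $\mu$, the measure $T_0\#\mu$ is the \emph{unique} Fr\'echet mean of the conditional law of $\nu$ given $\mu$. Granted this, suppose $\|T-T_0\|_{L^2(Q)}>0$; then by Fubini there is a Borel set $A \subseteq \mathrm{supp}(P_M)$ with $P_M(A) > 0$ on which $\|T-T_0\|_{L^2(\mu)}>0$. For $\mu \in A$, since $T, T_0 \in \mathcal{T}$ are both Brenier maps from the absolutely continuous source $\mu$ (Assumption~\ref{assumptionMu}), the uniqueness half of Brenier's theorem forces $T\#\mu \neq T_0\#\mu$. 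Strict uniqueness of the conditional Fr\'echet mean then yields
$$\int d_{\mathcal{W}}^{2}(T\#\mu,\nu)\, dP(\nu\mid\mu) \;>\; \int d_{\mathcal{W}}^{2}(T_0\#\mu,\nu)\, dP(\nu\mid\mu), \qquad \mu \in A,$$
while equality holds for $\mu \notin A$; integrating against $P_M$ delivers $M(T) > M(T_0)$.

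To prove the pointwise claim, fix $\mu$ and set $\lambda := T_0\#\mu$. By Assumptions~\ref{assumptionMu}, \ref{assumptionT} and \ref{noise_map}, $\lambda$ is absolutely continuous, $T_\epsilon\in\mathcal{T}$ is independent of $\mu$ with $\mathbb{E}[T_\epsilon]=\id$, and $T_\epsilon\#\lambda$ is a.s.\ absolutely continuous, so the conditional Fr\'echet mean exists and is unique by the result recalled in Section~\ref{Wasserstein}. Writing $F(b) := \tfrac{1}{2}\mathbb{E}[d_{\mathcal{W}}^{2}(b, T_\epsilon\#\lambda) \mid \mu]$, I will show that $\lambda$ is a global minimizer of $F$. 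Fix any candidate $b$ and consider the generalized geodesic based at $\lambda$ from $\lambda$ to $b$, namely $\sigma_t := [(1-t)\id + t\,T_{\lambda \to b}]\#\lambda$, $t \in [0,1]$. Two standard facts combine: (i) along $\sigma_t$, $d_{\mathcal{W}}^{2}(\cdot, T_\epsilon\#\lambda)$ is convex for each realization of $T_\epsilon$, whence $t\mapsto F(\sigma_t)$ is convex (convexity of the squared Wasserstein distance along generalized geodesics, cf.~\cite{panaretos2020invitation}); (ii) by Brenier's theorem the optimal map from $\lambda$ to $T_\epsilon\#\lambda$ is $T_\epsilon$ itself, so the standard first-variation formula in $\mathcal{W}_2$ gives
$$\left.\frac{d}{dt}F(\sigma_t)\right|_{t=0^+} \;=\; -\bigl\langle T_{\lambda\to b}-\id,\; \mathbb{E}[T_\epsilon]-\id\bigr\rangle_{L^{2}(\lambda)} \;=\; 0$$
under Assumption~\ref{noise_map}. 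A convex function with vanishing right derivative at the left endpoint attains its minimum there, so $F(b) = F(\sigma_1) \geq F(\sigma_0) = F(\lambda)$; combined with the uniqueness of the conditional Fr\'echet mean, $\lambda$ is the unique minimizer, which is the pointwise claim.

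I expect the main obstacle to be the passage from the first-order condition $\mathbb{E}[T_\epsilon]=\id$ to global optimality of $\lambda$. Because $\mathcal{W}_2$ has non-negative Alexandrov curvature, $F$ is \emph{not} convex along ordinary displacement geodesics, and a naive quadratic-expansion argument in the tangent Hilbert space would not close. The key move is to use \emph{generalized} geodesics anchored at the candidate minimizer $\lambda$---their very definition requires the absolute continuity of $\lambda$, which Assumptions~\ref{assumptionMu} and \ref{assumptionT} guarantee---along which $F$ becomes convex and the first-variation argument can be combined with convexity to force $\lambda$ to be the minimizer. Everything else (the Fubini argument, and the uniqueness half of Brenier's theorem to convert $L^{2}(\mu)$-discrepancy of maps into distinct pushforwards) is routine.
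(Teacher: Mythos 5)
Your overall architecture --- reduce to the pointwise claim that $T_0\#\mu$ is the unique conditional Fr\'echet mean, then integrate over a positive-$P_M$-measure set on which Brenier uniqueness forces $T\#\mu\neq T_0\#\mu$ --- matches the paper's, and that reduction is sound. The gap is in your proof of the pointwise claim, specifically in fact (i). The Ambrosio--Gigli--Savar\'e result says that $d^2_{\mathcal{W}}(\cdot,\eta)$ is convex along generalized geodesics \emph{based at $\eta$}; your curve $\sigma_t$ is based at $\lambda$, whereas the reference measures are $T_\epsilon\#\lambda$, so the result does not apply. Worse, since $T_{\lambda\to\lambda}=\id$, the generalized geodesic based at $\lambda$ from $\lambda$ to $b$ is \emph{exactly} the ordinary displacement geodesic $[(1-t)\id+tT_{\lambda\to b}]\#\lambda$ --- the very curve along which you correctly concede, in your closing paragraph, that $F$ fails to be convex (cf.\ the paper's remark citing Example 9.1.5 of \cite{ambrosio2008gradient}). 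So ``anchoring at the candidate minimizer'' changes nothing. What the cyclical-monotonicity bound \eqref{PC-d} together with your (correct) first-variation formula actually delivers is the reverse inequality $F(b)\le F(\lambda)+\tfrac12 d^2_{\mathcal{W}}(\lambda,b)$, which says nothing about optimality of $\lambda$; a vanishing right derivative at $t=0$ without genuine convexity does not yield $F(b)\ge F(\lambda)$.

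The step can be repaired, but by Kantorovich duality rather than geodesic convexity. Let $\varphi_\theta$ denote the Kantorovich potential of $T_\epsilon$, with additive constants normalized so that $\E[\varphi_\theta(x)]=\|x\|^2/2$ (possible since $\nabla\E\varphi_\theta=\E T_\epsilon=\id$ on the convex set $\Omega$). Duality gives, for every $b$,
$$\tfrac12 d^2_{\mathcal{W}}(b,\theta)-\tfrac12 d^2_{\mathcal{W}}(\lambda,\theta)\ \ge\ \int\Big(\tfrac{\|x\|^2}{2}-\varphi_\theta(x)\Big)\,\diff (b-\lambda)(x),$$
and averaging over $\theta=T_\epsilon\#\lambda$ annihilates the right-hand side. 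This is precisely the content of the results the paper invokes (Theorem 4.2.4 of \cite{panaretos2020invitation} and Theorem \ref{vineq}, i.e.\ Theorem 6 of \cite{chewi2020gradient}); the latter further upgrades the inequality to the quadratic growth $M(T)-M(T_0)\ge \tfrac{\E\alpha(T_\epsilon)}{2}\rho^2(T,T_0)$ using the $\alpha$-strong convexity of the noise potentials from Assumption \ref{noise_map} --- a strengthening your argument never exploits but which the paper reuses to obtain the convergence rate.
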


\begin{remark}
	[Identifiability $Q$-almost everywhere]
	Theorem \ref{identifiability-d} establishes that $T_0$ is identifiable, up to $Q$-null sets, and this holds true under minimal conditions on the input measures $\mu$. The intuition behind this form of identifiability is similar to the one provided in Remark 1 of  \citet{ghodrati2022distribution} for the case of one dimension: 
	If the measure $Q$ is supported on a subset $\Omega_0\subset \Omega$, the map $T_0$ cannot be identified on $\Omega\setminus \Omega_0$. But, if the measure $Q$ is mutually absolutely continuous with the Lebesgue measure, then identifiability is also true almost everywhere on $\Omega$ with respect to Lebesgue measure.
	
	This equivalence can be achieved by enforcing additional conditions on the law of random covariate measures $\mu$. One straightforward condition is to require that the input measures, $\mu$, have a bounded density from below with positive probability. However, this implies that the support of $\mu$ equals $\Omega$ with positive probability, which may be restrictive since we want our model to include scenarios where none of the covariate measures have the full support on $\Omega$.
	
	A weaker condition to ensure the equivalence of $Q$ with the Lebesgue measure is to assume the existence of a cover $\{E_m\}_{m\geq 1}$ of $\Omega$ such that the probability $P_M\{E_m\subseteq\mathrm{supp}(f_\mu)\}>0$ is greater than zero for all $m$. This condition suggests that different covariate measures can provide information about $T_0$ on different subsets of $\Omega$, but collectively, they must provide information about all of $\Omega$. Consider an example where $\Omega$ is the $d$-dimensional unit cube  and $\mu$ is defined as the normalized Lebesgue measure on $S=\big([U_1,U_1+1/3] \mod 1 \big)\times\cdots\times \big([U_d,U_d+1/3] \mod 1 \big)$. Here, $\{U_i\}_{i=1}^d$ are independent uniform random variables on $[0,1]$. In this scenario, none of the $\mu$ realizations are supported on $\Omega$, yet the ``cover condition" is met. This remark directly extends the analogous observations in the one-dimensional case.
\end{remark}

Figure \ref{fig:flow} illustrates the output of Model \eqref{model-d} when $d=2$. In the first plot of each column, blue dots represent samples from a covariate distribution $\mu$.  The black dots are sampled from the (conditional Fr\'echet mean) distribution $T_0\#\mu$. The displacement vector field generated by $T_0-\id$ is depicted with flow curves, with the colour intensity along these curves reflecting the magnitude of displacement at each point. We then examine 4 different random maps $T_\epsilon$. In the next 4 plots, we observe samples from the response distribution $\nu=T_\epsilon\#T_0\#\mu$, represented by red dots. In each plot, the flow curves represent the displacement vector field $T_\epsilon-\id$.

\begin{figure}
	\centering
	\includegraphics[width=0.866\linewidth]{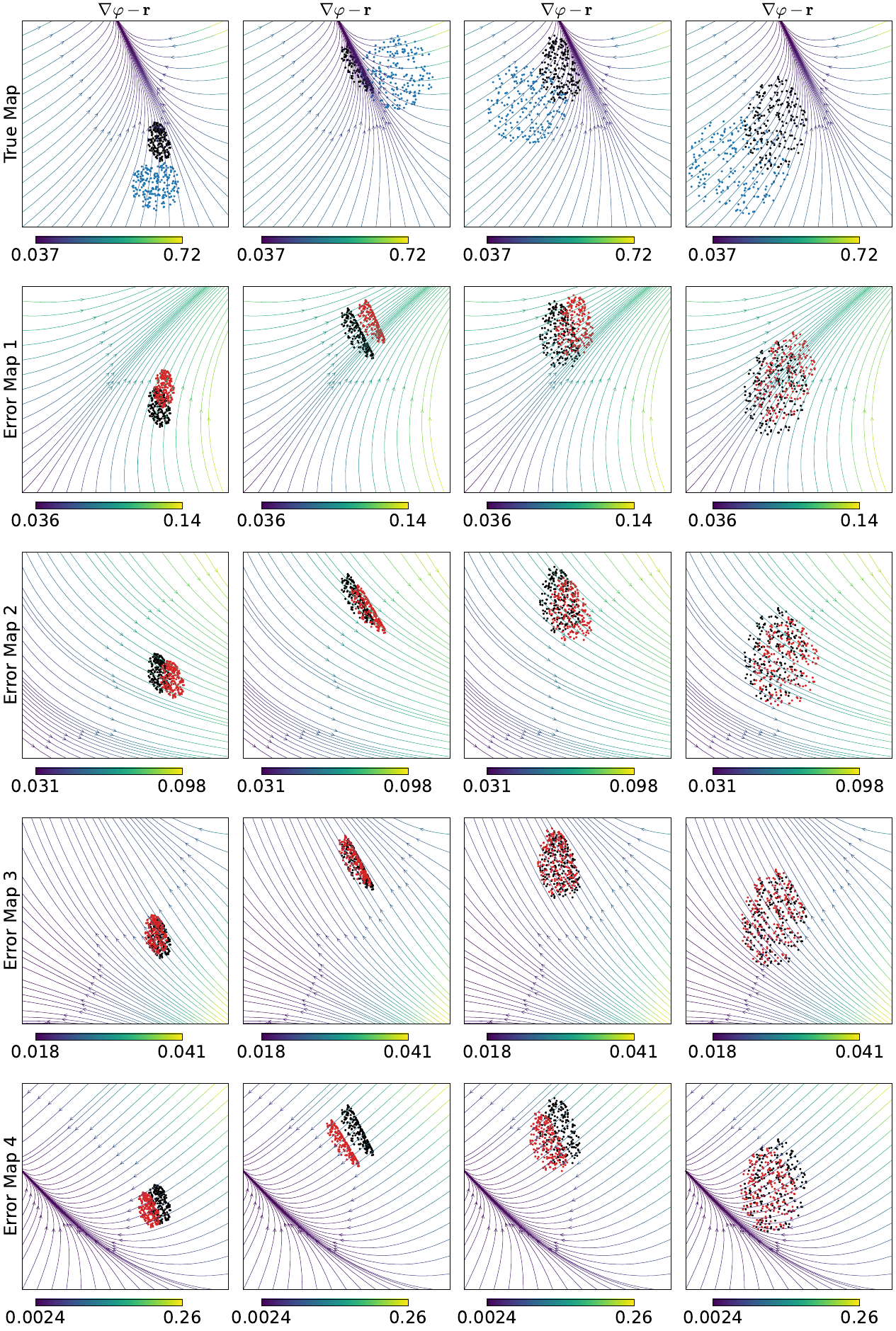}
	\caption{Illustration of Model \eqref{model-d} for $d=2$, showing samples from $\mu$ (blue), $T_0\#\mu$ (black), and $\nu=T_\epsilon\#T_0\#\mu$ (red) for four different realisations of the error map, along with corresponding displacement vector field (flow curves).}
	\label{fig:flow}
\end{figure}

\section{Statistical Analysis}{\label{Statistical Analysis}}

To obtain a consistent estimator and derive its rate of convergence, we use empirical process theory. This requires us to make additional regularity assumptions on the model. We consider the case where the true map $T_0$ satisfies a H\"older condition (defined below).

\begin{assumption}{\label{assumption_compact}}
	The domain $\Omega$ is a compact subset of $\R^d$.
\end{assumption}

\begin{definition}(H\"older Space)
	For any vector $\bk \in \mathbb{N}^d$ with coordinates $(k_1,\cdots,k_d)$ write $|\bk|=\sum_{i=1}^d k_i$ and define the differential operator
	$$D^{\bk}=\frac{\partial^{|\bk|}}{\partial x_1^{k_1}\cdots \partial x_d^{k_d}}.$$

	For  any  real  number $\beta >0$,  we  define  the  H\"older  norm  of  smoothness $\beta$ of a $\floor{\beta}$-times differentiable function $f:\Omega\to\R$ as 
	\begin{equation}
		\norm{f}_{C^\beta}:= \max_{|\bk|\leq \floor{\beta}}\sup_x |D^{\bk} f(x)|+ \max_{|\bk|=\floor{\beta}} \sup_{x \neq y} \frac{ |D^{\bk} f(x)-D^{\bk} f(y)|}{\norm{x-y}^{\beta-\floor{\beta}}}.
	\end{equation}
	The H\"older ball of smoothness $\beta$ and radius $L >0$, denoted by $C^{\beta}_R(\Omega)$, is then defined as the class of $\floor{\beta}$-times continuously differentiable functions with H\"older norm bounded by the radius $L$:
	$$C^{\beta}_R(\Omega):=\{f \in C^{\floor{\beta}}(\Omega): \norm{f}_{C^\beta}\leq R \}.$$
\end{definition}
We might occasionally drop the argument $\Omega$ when the underlying space can be understood from the context. Now consider the following sets of maps:
$$\mathcal{T}_{\beta,\gamma,R}:=\{T: T=\nabla \varphi, \varphi \in \Phi \cap \overline{C^{\beta+\gamma}_R}^{\norm{.}_{C^\beta}}\}$$
$$\mathcal{T}_{\beta,R}:=\{T: T=\nabla \varphi, \varphi \in \Phi_0 \cap C^{\beta}_R\}.$$

\noindent  It holds that $\mathcal{T}_{\beta,\gamma,R} \subset \mathcal{T}_{\beta,3R}$ (by way of Lemma \ref{embedding} in Section \ref{sec:proofs}). Our assumption now is:

\begin{assumption}{\label{assumptionT_beta_holder}}
The map $T_0$ belongs to the set $\mathcal{T}_{\beta,\gamma,R}$, where $\beta$, $\gamma$, and $R$ are positive constants and $\floor{\beta+\gamma} = \floor{\beta}$.
\end{assumption}

\begin{remark}
The $\beta$-H\"older function classes appear frequently in optimization and such regularity assumption is standard in non-parametric regression. In statistical optimal transport theory (see e.g. \cite{gunsilius2022convergence,hutter2019minimax}) similar assumptions are imposed to estimate optimal transport maps when observing samples from distributions.
\end{remark}

Our objective is to obtain an estimator, denoted by $\hat{T}_{N,(\beta,\gamma,R)}$, of the unknown map $T_0$. To do so, we define $\hat{T}_{N,(\beta,\gamma,R)}$ as the constrained minimizer of the sample version of the functional $M$,

\begin{equation}{\label{estimator_functional}}
\hat{T}_{N,(\beta,\gamma,R)}:=\argmin_{T \in \mathcal{T}_{\beta,\gamma,R}} M_N(T), \quad \quad M_N(T):= \frac{1}{2N} \sum_{i=1}^N d^2_{\mathcal{W}}(T\# \mu_i,\nu_i),
\end{equation}
Here, $(\mu_i,\nu_i)$ are independent samples drawn from $P$ for $i=1,\dots,N$. The constraint amounts to the requirement that $T \in \mathcal{T}_{\beta,\gamma,R}$. The minimizer might not be unique. We can take any of the minimizers of \eqref{estimator_functional} as $\hat{T}_{N,(\beta,\gamma,R)}$.  But it will exist in $\mathcal{T}_{\beta,3R}$, provided $\beta>2$:

\begin{theorem}{\label{existence_estimator}}
	The minimization problem \eqref{estimator_functional} has a solution in $\mathcal{T}_{\beta,3R}$ when $\beta>2$. 
\end{theorem}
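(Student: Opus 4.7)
The plan is to apply the direct method of the calculus of variations: take a minimizing sequence in $\mathcal{T}_{\beta,\gamma,R}$, extract a convergent subsequence by compactness, and show that the limit attains the infimum and lies in $\mathcal{T}_{\beta,3R}$. Since $M_N \geq 0$, the infimum $m := \inf_{T \in \mathcal{T}_{\beta,\gamma,R}} M_N(T)$ is finite. Pick a minimizing sequence $T_n = \nabla \varphi_n \in \mathcal{T}_{\beta,\gamma,R}$. By the embedding $\mathcal{T}_{\beta,\gamma,R} \subset \mathcal{T}_{\beta,3R}$ from Lemma \ref{embedding}, choose representatives $\varphi_n \in \Phi_0 \cap C^\beta_{3R}(\Omega)$, normalized by $\varphi_n(x_0) = 0$ at some fixed $x_0 \in \Omega$; these satisfy the uniform bound $\|\varphi_n\|_{C^\beta} \leq 3R$.

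Next, since $\beta > 2$ and $\Omega$ is compact (Assumption \ref{assumption_compact}), by Arzel\`a--Ascoli the ball $C^\beta_{3R}(\Omega)$ is precompact in $C^{\beta'}(\Omega)$ for every $\beta' < \beta$. Extracting a subsequence (not relabelled), $\varphi_n \to \varphi^*$ in $C^{\beta'}$ for some $2 \leq \beta' < \beta$. Lower semicontinuity of the $C^\beta$-H\"older norm under $C^{\beta'}$-convergence gives $\|\varphi^*\|_{C^\beta} \leq 3R$, and uniform convergence of the Hessians passes the bound $0 \preceq \nabla^2 \varphi_n \preceq LI$ to the limit, so $\varphi^* \in \Phi_0 \cap C^\beta_{3R}$. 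Hence $T^* := \nabla \varphi^* \in \mathcal{T}_{\beta,3R}$.

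To complete the proof, I would establish the continuity $M_N(T_n) \to M_N(T^*)$ using inequality \eqref{PC-d}. Because each $\mu_i$ is absolutely continuous (Assumption \ref{assumptionMu}), both $T_n$ and $T^*$ are Brenier-optimal maps out of $\mu_i$, so
\[
d_{\mathcal{W}}(T_n\#\mu_i,\, T^*\#\mu_i) \leq \|T_n - T^*\|_{L^2(\mu_i)} \leq \|T_n - T^*\|_{\infty} \longrightarrow 0,
\]
where the rightmost convergence follows from the $C^2$-convergence of the potentials. The triangle inequality then yields $d_{\mathcal{W}}(T_n\#\mu_i, \nu_i) \to d_{\mathcal{W}}(T^*\#\mu_i, \nu_i)$ for each $i$, and averaging over $i$ produces $M_N(T_n) \to M_N(T^*)$. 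Since $M_N(T_n) \to m$, we conclude $M_N(T^*) = m$, so $T^*\in\mathcal{T}_{\beta,3R}$ attains the infimum.

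The main obstacle is that the limit $\varphi^*$ need not preserve strong convexity: each $\varphi_n$ lies in $\Phi = \bigcup_{\alpha > 0} \Phi_\alpha$ with some strict lower bound $\alpha_n > 0$ on its Hessian, but the $\alpha_n$ may degenerate to $0$, so only $\varphi^* \in \Phi_0$ can be guaranteed. This is exactly why the conclusion places the minimizer in the wider class $\mathcal{T}_{\beta,3R}$ (defined via $\Phi_0$) rather than in $\mathcal{T}_{\beta,\gamma,R}$ itself. A secondary technical point is that the closure $\overline{C^{\beta+\gamma}_R}^{\|\cdot\|_{C^\beta}}$ is taken in the $C^\beta$-topology, whereas the compactness argument only yields $C^{\beta'}$-convergence with $\beta' < \beta$; membership of $\varphi^*$ in that closure cannot be recovered directly, but the essential bound $\|\varphi^*\|_{C^\beta} \leq 3R$ together with the Hessian constraints is all that is needed to land $T^*$ in $\mathcal{T}_{\beta,3R}$.
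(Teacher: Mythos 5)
Your proof is correct and follows the same overall architecture as the paper's: direct method on a minimizing sequence of potentials, H\"older compactness to extract a limit $\varphi^*$, use of $\beta>2$ to pass the Hessian bounds $0\preceq\nabla^2\varphi_n\preceq LI$ to the limit (whence only $\Phi_0$, not $\Phi$, survives --- exactly the reason the conclusion lands in $\mathcal{T}_{\beta,3R}$), and continuity of $M_N$ via inequality \eqref{PC-d} and the triangle inequality. The one place you genuinely diverge is the compactness step: the paper invokes the precompactness of $C^{\beta+\gamma}_R$ in $C^{\beta}$ (Lemma 6.33 of Gilbarg--Trudinger), so the extra $\gamma$ of smoothness buys convergence in the full $C^{\beta}$ norm and the limit stays in the closure $\overline{C^{\beta+\gamma}_R}^{\norm{\cdot}_{C^\beta}}$; you instead use only the $C^{\beta}_{3R}$ bound from Lemma \ref{embedding}, get convergence in $C^{\beta'}$ for some $2\le\beta'<\beta$, and recover $\norm{\varphi^*}_{C^\beta}\le 3R$ by lower semicontinuity of the H\"older norm. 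Your route is marginally more self-contained (the $\gamma$-gap plays no role in the compactness argument, only in the a priori bound), at the cost of losing membership of $\varphi^*$ in the closure class --- which, as you correctly observe, is not needed for the stated conclusion. One cosmetic slip: the normalization $\varphi_n(x_0)=0$ is both unnecessary (the $C^\beta$ bound already controls $\norm{\varphi_n}_\infty$) and mildly inconsistent with keeping the normalized potentials inside $C^\beta_{3R}$, since subtracting the constant $\varphi_n(x_0)$ can inflate the sup-norm part of the H\"older norm up to $6R$; simply drop it.
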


\begin{remark}
	The functional $d^2_{\mathcal{W}}(T\# \mu,\nu)$ is not necessarily convex with respect to $T$, meaning that we \textit{cannot} expect $d^2_{\mathcal{W}}([a T_1 + (1-a)T_2]\# \mu,\nu)\leq a d^2_{\mathcal{W}}(T_1\# \mu,\nu)+(1-a) d^2_{\mathcal{W}}(T_2\# \mu,\nu)$. The reason is that the Wasserstein distance is not necessarily convex with respect to geodesics. And, if we set $T_2$ as the identity function, $[a T_1 +(1-a)Id]\#\mu$ represent the geodesic between $\mu$ and $T_1\#\mu$. Under this scenario, selecting $T_1$, $\mu$ and $\nu$ as per example 9.1.5 of \cite{ambrosio2008gradient} results in the violation of the inequality.
	
	Nevertheless, when $\nu$ is close to $\mu$ we can show convexity. Denote by $T_{\mu\to \nu}$ the optimal transport from $\mu$ to $\nu$. 
	Suppose $T_{\mu\to \nu}$ is gradient of a $\lambda$-strongly convex function. Then based on Theorem 6 of \citet{manole2021plugin} and inequality \eqref{PC-d}, we have:
	
	\begin{equation}\label{thm6}
		\begin{split}
		d^2_{\mathcal{W}}([a T_1 + (1-a)T_2]\# \mu,\nu) &\leq \norm{a T_1 +(1-a) T_2 -T_{\mu \to \nu}}^2_{L^2(\mu)} \\
		&\leq \lambda^2  d^2_{\mathcal{W}}([a T_1 + (1-a)T_2]\# \mu,\nu),
		\end{split}
	\end{equation}
	and similarly
	\begin{equation}\label{thm62}
		\begin{split}
		a d^2_{\mathcal{W}}(T_1 \# \mu,\nu)+(1-a) d^2_{\mathcal{W}}(T_2 \# \mu,\nu)  &\leq a \norm{T_1-T_{\mu \to \nu}}^2_{L^2(\mu)}+(1-a) \norm{T_2-T_{\mu \to \nu}}^2_{L^2(\mu)} \\
		&\leq a \lambda^2  d^2_{\mathcal{W}}(T_1\# \mu,\nu)+ (1-a) \lambda^2  d^2_{\mathcal{W}}(T_2\# \mu,\nu).
		\end{split}
	\end{equation}
	Also, by strict convexity of squared norm we have 
\begin{equation}
\begin{split}
\norm{a T_1 +(1-a) T_2 -T_{\mu \to \nu}}^2_{L^2(\mu)}< a \norm{T_1-T_{\mu \to \nu}}^2_{L^2(\mu)} +  (1-a) \norm{T_2-T_{\mu \to \nu}}^2_{L^2(\mu)}.
\end{split}
\end{equation}
	
As we observe from the above equation, the middle value in inequality \eqref{thm6} is strictly smaller than the middle value in inequality \eqref{thm62}. Moreover, if $\lambda$ approaches 1, both inequalities' right-hand sides converge to their respective left-hand sides. Consequently, for small values of $\lambda$, the left-hand side of \eqref{thm6} is majorised by the left-hand side of \eqref{thm62}, which establishes convexity. It is worth mentioning that when $\lambda$ approaches 1, $T_{\mu\to\nu}$ converges to the identity map, and $\nu$ converges to $\mu$, providing a perspective on why convexity occurs when $\nu$ is close to $\mu$.
	
\end{remark}

To evaluate the estimator's quality, we will use the \textit{Fr\'echet mean squared error}, which is  the natural risk function in this context:
$$R(T):=\E_{\mu \sim P_M} d^2_{\mathcal{W}}(T_0\#\mu,T\#\mu)=\int d^2_{\mathcal{W}}(T_0\#\mu,T\#\mu) \diff P_M(\mu).$$

The value on the right-hand side defines a semi-metric on the set of maps $\mathcal{T}$. More specifically, for any two maps $T_1,T_2 \in \mathcal{T}$, we can define

$$\rho(T_1,T_2) = \Big[\int d^2_{\mathcal{W}}(T_1\#\mu,T_2\#\mu)\diff P_M(\mu)\Big]^{1/2}.$$
\begin{lemma}[Semi-metric property of $\rho$]{\label{semi-metric}}
	The map  $\rho(\cdot,\cdot)$ satisfies all the properties of a metric, except that there may exist pairs $T_1\neq T_2$ such that $\rho(T_1,T_2)=0$. 
\end{lemma}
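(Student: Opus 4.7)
The plan is to verify the defining properties of a semi-metric in turn: non-negativity, symmetry, the identity $\rho(T,T)=0$, and the triangle inequality; and then exhibit pairs $T_1\neq T_2$ in $\mathcal{T}$ with $\rho(T_1,T_2)=0$ to show that $\rho$ genuinely fails to be a metric.

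Non-negativity is immediate since $d_{\mathcal{W}}^2\geq 0$ and so the integrand defining $\rho^2$ is non-negative. Symmetry follows point-wise from the symmetry of the Wasserstein distance, $d_{\mathcal{W}}(T_1\#\mu,T_2\#\mu)=d_{\mathcal{W}}(T_2\#\mu,T_1\#\mu)$, and then from integrating against $P_M$. The identity $\rho(T,T)=0$ is equally straightforward, since for every $\mu$ the measures $T\#\mu$ and $T\#\mu$ coincide and thus $d_{\mathcal{W}}(T\#\mu,T\#\mu)=0$.

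The substantive step is the triangle inequality. For any fixed $\mu$, since $d_{\mathcal{W}}$ is a genuine metric on $\mathcal{W}_2(\Omega)$, the point-wise inequality
\begin{equation*}
d_{\mathcal{W}}(T_1\#\mu,T_3\#\mu) \leq d_{\mathcal{W}}(T_1\#\mu,T_2\#\mu) + d_{\mathcal{W}}(T_2\#\mu,T_3\#\mu)
\end{equation*}
holds. Viewing the two terms on the right as functions of $\mu$ in $L^2(P_M)$, Minkowski's inequality in $L^2(P_M)$ then delivers
\begin{equation*}
\rho(T_1,T_3) \leq \rho(T_1,T_2) + \rho(T_2,T_3),
\end{equation*}
which is precisely the triangle inequality for $\rho$. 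I do not anticipate any genuine obstacle here; the only minor point is to note measurability of $\mu\mapsto d_{\mathcal{W}}(T_i\#\mu,T_j\#\mu)$, which follows since $T_i\#\mu$ is continuous in $\mu$ (in Wasserstein distance) for each fixed $T_i\in\mathcal{T}$, combined with continuity of $d_{\mathcal{W}}$.

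Finally, to demonstrate that $\rho$ is only a semi-metric, I would exhibit a concrete situation in which $\rho(T_1,T_2)=0$ with $T_1\neq T_2$. The natural source of degeneracy is the mismatch between the pointwise equality of $T_1$ and $T_2$ on $\mathrm{supp}(Q)$ and their equality on $\Omega$: if $T_1,T_2\in\mathcal{T}$ agree $Q$-almost everywhere then $T_1\#\mu=T_2\#\mu$ for $P_M$-almost every $\mu$ (by the definition of $Q$ as the Bochner mean of $P_M$), and hence $d_{\mathcal{W}}(T_1\#\mu,T_2\#\mu)=0$ for $P_M$-a.e.\ $\mu$, giving $\rho(T_1,T_2)=0$. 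Whenever $Q$ is not supported on all of $\Omega$, one can clearly produce two distinct maps in $\mathcal{T}$ that coincide on $\mathrm{supp}(Q)$ but differ off it, establishing that $\rho$ is not a metric in general. This ties directly to the $Q$-a.e. identifiability statement discussed in the remark after Theorem \ref{identifiability-d}.
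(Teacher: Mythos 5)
Your proof is correct and follows essentially the same route as the paper: the paper establishes the triangle inequality by expanding $\bigl(\rho(T_1,T_2)+\rho(T_2,T_3)\bigr)^2$ and applying Cauchy--Schwarz together with the pointwise triangle inequality for $d_{\mathcal{W}}$, which is exactly the unrolled proof of the Minkowski inequality in $L^2(P_M)$ that you invoke. Your additional observations (symmetry, non-negativity, and the construction of distinct maps agreeing $Q$-almost everywhere to show $\rho$ is genuinely only a semi-metric) are correct and consistent with the identifiability remark, though the paper does not include them in its proof.
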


\vspace{3mm}

\noindent Since $\mathcal{W}_2(\R^d)$ is non-negatively curved (equation \eqref{PC-d}), for any two maps $T_1,T_2 \in \mathcal{T}$ we have
\begin{equation}{\label{norm_comparison}}
\begin{split}
\rho^2(T_1,T_2)&=\int d^2_{\mathcal{W}}(T_1\# \mu,T_2\#\mu) \diff P_M(\mu)\\ &\leq \int \norm{T_1-T_2}^2_{L^2(\mu)} \diff P_M(\mu) \\
&=\int \int |(T_1-T_2)(x)|^2 \diff \mu(x) \diff P_M(\mu) \\
&=\norm{T_1-T_2}_{L^2(Q)}^2,
\end{split}
\end{equation}
with equality when $d=1$.
We use this to derive an upper bound for the rate of convergence of $\hat{T}_N$ with respect to semi-metric $\rho$.
\begin{theorem}(Rate of Convergence){\label{rate_of_convergence-d}}
    Suppose the Assumptions \ref{assumptionMu}, \ref{assumptionT}, \ref{noise_map}, \ref{assumption_compact} and \ref{assumptionT_beta_holder} are satisfied with some $\beta>2$ and $\gamma,R>0$. Then
    $$r_N\rho(\hat{T}_{N,(\beta,\gamma,R)},T_0) = O_\mathbb{P}(1),$$
	where
	\begin{equation}
		r_N=
		\begin{cases} 
			N^{\frac{\beta}{2\beta+d}} & \text{if } \beta>\frac{d}{2}, \\[2ex]
			\frac{N^{1/4}}{(\log(N))^{1/2}} & \text{if } \beta = \frac{d}{2},\\[2ex]
			N^{\frac{1}{\beta(2\beta+d)}} & \text{if } \beta < \frac{d}{2}.
		\end{cases}
	\end{equation}
 In particular, for $d>4$, and when $\frac{d}{2}<\beta<\infty$, the rate is between $N^{-1/4}$ and $N^{-1/2}$.
\end{theorem}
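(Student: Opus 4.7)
I would apply the standard empirical-process framework for $M$-estimator rates (in the spirit of Theorem 3.2.5 of van der Vaart--Wellner), using $\rho$ as the intrinsic semi-metric on $\mathcal{T}$. Write $\hat T := \hat T_{N,(\beta,\gamma,R)}$. Since $T_0 \in \mathcal{T}_{\beta,\gamma,R}$ by Assumption~\ref{assumptionT_beta_holder} and $\hat T$ minimises $M_N$ on this class, the usual basic inequality
\begin{equation*}
M(\hat T) - M(T_0) \,\leq\, (M_N-M)(T_0) - (M_N-M)(\hat T)
\end{equation*}
holds. The argument then rests on two complementary pillars---a curvature (quadratic-growth) bound on the population functional $M$ at $T_0$, and a modulus-of-continuity bound on the empirical process---combined by peeling to yield the rate in $\rho$.

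\textit{Curvature step.} I would first establish $M(T) - M(T_0) \gtrsim \rho^2(T,T_0)$ on $\mathcal{T}_{\beta,\gamma,R}$. Since $T_\epsilon \in \mathcal T$ is the gradient of a convex function, it is optimal from $T_0\#\mu$ to $\nu$, whence $d^2_{\mathcal W}(T_0\#\mu,\nu) = \|T_\epsilon \circ T_0 - T_0\|^2_{L^2(\mu)}$ by a change of variables, while for any competitor $T$ the bound $d^2_{\mathcal W}(T\#\mu,\nu) \leq \|T - T_\epsilon \circ T_0\|^2_{L^2(\mu)}$ follows from \eqref{PC-d}. Expanding the squared norm and using the Bochner-mean condition $\E[T_\epsilon] = \id$ to annihilate the cross term yields the upper bound $M(T) - M(T_0) \lesssim \|T - T_0\|^2_{L^2(Q)}$. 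The matching lower bound has to be extracted from a quantitative converse of \eqref{PC-d} valid for pairs of optimal maps with uniformly strongly convex potentials (in the spirit of Theorem 6 of \cite{manole2021plugin}), together with the comparison $\rho(T,T_0) \asymp \|T - T_0\|_{L^2(Q)}$ that the same strong convexity provides on the constrained class.

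\textit{Empirical-process step.} By Lemma~\ref{embedding}, $\mathcal{T}_{\beta,\gamma,R} \subset \mathcal{T}_{\beta,3R}$, whose members are gradients of potentials in a $C^\beta$-ball on the compact $\Omega$. Classical Kolmogorov--Tikhomirov entropy estimates then give $\log N(\epsilon, \mathcal{T}_{\beta,\gamma,R}, \|\cdot\|_\infty) \lesssim \epsilon^{-d/\beta}$, and the Lipschitz dependence of $T \mapsto d^2_{\mathcal W}(T\#\mu,\nu)$ on $T$ in $\|\cdot\|_\infty$ (thanks to compactness of $\Omega$) transfers this to bracketing entropy of the induced functional class. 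A bracketing maximal inequality then controls the modulus of continuity,
\begin{equation*}
\E^* \sup_{\substack{T \in \mathcal{T}_{\beta,\gamma,R} \\ \rho(T,T_0) \leq r}} \sqrt N \,|(M_N-M)(T) - (M_N-M)(T_0)| \;\lesssim\; \int_0^r \epsilon^{-d/(2\beta)}\,d\epsilon,
\end{equation*}
and combining with the curvature step via the standard rate lemma fixes $r_N$ by balancing this modulus at $r_N^{-1}$ against $\sqrt{N}\, r_N^{-2}$. For $\beta > d/2$ the integral is finite and equivalent to $r^{1-d/(2\beta)}$, producing $r_N = N^{\beta/(2\beta+d)}$; at $\beta = d/2$ a $(\log N)^{1/2}$ correction appears naturally from the endpoint of the integral.

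\textit{Main obstacle.} The principal technical hurdle is the curvature step: $\mathcal{W}_2(\R^d)$ is positively curved for $d \geq 2$, so the usual Fr\'echet-variance inequality of NPC spaces is unavailable, and the quadratic lower bound has to be eked out of the optimality of $T_\epsilon$, the Bochner-mean condition $\E[T_\epsilon]=\id$, and the strong convexity of potentials, without any NPC shortcut. A secondary difficulty is the regime $\beta < d/2$, where the bracketing integral above diverges at $0$; here the bracketing-based maximal inequality must be replaced by a uniform-entropy/Dudley-type bound with a suitable truncation of the empirical process, and the ensuing balance of bias against variance yields the non-standard exponent $1/(\beta(2\beta+d))$.
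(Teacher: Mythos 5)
Your overall architecture --- quadratic growth of $M$ at $T_0$ in the semi-metric $\rho$, a bracketing/chaining bound on the modulus of continuity of $M_N-M$, and the rate theorem (Theorem \ref{theorem3.2.5}) to balance the two --- coincides with the paper's. But the two steps you leave open or merely assert are exactly where the content lies, and one of them as written would not produce the stated exponent.

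The curvature step is a genuine gap. Your upper bound $M(T)-M(T_0)\lesssim\norm{T-T_0}^2_{L^2(Q)}$ via \eqref{PC-d} and $\E T_\epsilon=\id$ is correct but is not what the rate theorem needs; the required direction is $M(T)-M(T_0)\gtrsim\rho^2(T,T_0)$, which you flag as the ``main obstacle'' without supplying an argument. The paper closes this with the variance inequality of Chewi et al.\ (Theorem \ref{vineq}) applied \emph{conditionally on $\mu_0$}: because each noise potential is $\alpha(T_\epsilon)$-strongly convex and the potentials average to $\norm{x}^2/2$ (Assumption \ref{noise_map}), the conditional law $P(\nu\mid\mu_0)$ satisfies a variance inequality around its barycenter $T_0\#\mu_0$ with constant $\E\,\alpha(T_\epsilon)>0$; integrating over $\mu_0$ gives \eqref{quadratic-growth-M}, i.e.\ quadratic growth directly in $d^2_{\mathcal W}(T\#\mu_0,T_0\#\mu_0)$ and hence in $\rho^2$. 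This also sidesteps the two-sided comparison $\rho\asymp\norm{T-T_0}_{L^2(Q)}$ you invoke: only the one-sided bound \eqref{norm_comparison} is used (and only in the entropy step), so no quantitative converse of \eqref{PC-d} is needed for the lower bound.

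The entropy step as you state it would fail to give $r_N=N^{\beta/(2\beta+d)}$. You claim $\log N(\epsilon,\mathcal T_{\beta,\gamma,R},\norm{\cdot}_\infty)\lesssim\epsilon^{-d/\beta}$, but the maps in $\mathcal T_{\beta,3R}$ are \emph{gradients} of potentials in a $C^\beta$ ball and therefore lie only in a $C^{\beta-1}$ ball of vector fields; the Kolmogorov--Tikhomirov bound for the map class itself is $\epsilon^{-d/(\beta-1)}$, which after the Lipschitz transfer and the balancing $r_N^2\phi_N(1/r_N)\lesssim\sqrt N$ yields the slower rate $N^{(\beta-1)/(2(\beta-1)+d)}$. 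To recover the exponent $d/\beta$ the paper does not cover the maps in sup-norm at all: it uses Lemma \ref{bound-by-potentials} (Gunsilius's pointwise bound $\norm{\nabla\varphi_1-\nabla\varphi_2}^2\lesssim|\varphi_1-\varphi_2|$ for $L$-Lipschitz gradients) to control $\norm{f_{T_1}-f_{T_2}}^2_{L^2(P)}$ by $\norm{\varphi_1-\varphi_2}_{L^1(Q)}$, and then applies the bracketing entropy of the potential class $C^\beta_{3R}$ (Lemma \ref{metric-entropy-CL}). Your handling of the divergent bracketing integral for $\beta\le d/2$ by truncation is consistent with what the paper does (truncating the lower limit of the integral as in van der Vaart--Wellner, p.~326), so that part is fine once the entropy exponent is repaired.
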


\begin{remark}[The case $d=1$]
	In the 1-dimensional case, \citet{ghodrati2022distribution} achieved a convergence rate of $N^{-1/3}$ using the $L^2(Q)$-norm, and \citet{ghodrati2023minimax} demonstrated that this rate is minimax optimal. It's worth noting that, for $d=1$, the $L^2(Q)$-norm is equivalent to the semi-metric $\rho$ under the assumption that $T_0$ is a non-decreasing map. As a result, we can compare their rates with the one provided by Theorem \ref{rate_of_convergence-d}. When $d=1$, Theorem \ref{rate_of_convergence-d} suggests that when $\frac{d}{2}<\beta\leq \infty$, the convergence rate lies between $N^{-2/5}$ and $N^{-1/2}$. The faster convergence rate is a result of the additional smoothness assumption, and thus, there is no inconsistency between the two results.
\end{remark}

\section{Computation}{\label{Computation}}

In this section, we propose a method to compute the estimator $\hat{T}_{N,(\beta,\gamma,R)}$ defined in \eqref{estimator_functional}. We first show $M_N$ can be expressed as a difference of two convex functions with respect to the domain $\mathcal{T}$. Then, after showing some results about Gateaux differentiability of $M_N$, we propose a difference of convex functions algorithm (DCA) to minimize $M_N$.

	

\begin{lemma}{\label{DC}}
Consider the functional $M_N(T)$. It can be expressed as the difference of two convex functionals, namely $g(T)$ and $h(T)$, where
\begin{align*}
	g(T)&:=\sum_{i=1}^N \int \norm{T(x)}^2 \diff \mu_i(x) + \int \norm{y}^2 \diff \nu_i(y),\\
	h(T)&:=2\sum_{i=1}^N \sup_{\gamma_i \in \Gamma(\mu_i,\nu_i)} \int \langle T(x),y \rangle \diff \gamma_i(x,y).
\end{align*}
\end{lemma}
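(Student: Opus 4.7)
The plan is to rewrite $d^2_\w(T\#\mu_i, \nu_i)$ as an infimum over couplings of $\mu_i$ and $\nu_i$ rather than of $T\#\mu_i$ and $\nu_i$, expand the squared norm, and then verify that the two resulting pieces are convex in $T$.

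First, I would establish the identity
$$d^2_\w(T\#\mu_i, \nu_i) = \inf_{\gamma \in \Gamma(\mu_i, \nu_i)} \int \norm{T(x)-y}^2 \diff\gamma(x,y).$$
The $\leq$ direction is immediate: any $\gamma \in \Gamma(\mu_i, \nu_i)$ pushes forward under $(T, \id)$ to an element of $\Gamma(T\#\mu_i, \nu_i)$ with cost $\int \norm{T(x)-y}^2 \diff\gamma$. The $\geq$ direction uses a disintegration: any $\tilde\gamma \in \Gamma(T\#\mu_i, \nu_i)$ can be written as $(T\#\mu_i)(\diff u)\, K(u, \diff y)$ for some Markov kernel $K$, and then $\gamma(\diff x, \diff y) := \mu_i(\diff x)\, K(T(x), \diff y)$ lies in $\Gamma(\mu_i, \nu_i)$ and carries the same transport cost (indeed $(T,\id)\#\gamma = \tilde\gamma$). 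Expanding $\norm{T(x)-y}^2 = \norm{T(x)}^2 - 2\langle T(x), y\rangle + \norm{y}^2$, the first and third terms depend only on the marginals of $\gamma$ and pull out as $\int \norm{T(x)}^2 \diff\mu_i$ and $\int \norm{y}^2 \diff\nu_i$, while the infimum of the cross term becomes $-2\sup_{\gamma_i \in \Gamma(\mu_i, \nu_i)} \int \langle T(x), y\rangle \diff\gamma_i$. Summing over $i$ then gives $2N\,M_N(T) = g(T) - h(T)$, so that $M_N$ is the difference of two convex functionals up to an innocuous positive constant (which may be absorbed into $g$ and $h$).

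It then remains to confirm the convexity of $g$ and $h$. For $g$, the map $T \mapsto \norm{T(x)}^2$ is convex pointwise in $x$ since the squared Euclidean norm is convex on $\R^d$; integration against the positive measure $\mu_i$ and summation over $i$ preserve convexity, and the $\norm{y}^2$ terms are constant in $T$. For $h$, for each fixed $\gamma_i$ the functional $T \mapsto \int \langle T(x), y\rangle \diff\gamma_i(x,y)$ is linear in $T$, hence convex; a pointwise supremum of a family of convex functionals is convex, and summation over $i$ again preserves convexity. The only real subtlety is the disintegration/lifting that establishes equality of the two infima in the first step; once that is in place, the decomposition and the convexity of its two components follow by routine manipulations.
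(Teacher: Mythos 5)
Your proposal is correct and follows essentially the same route as the paper: rewrite $d^2_{\mathcal{W}}(T\#\mu_i,\nu_i)$ as an infimum over couplings of $\mu_i$ and $\nu_i$, expand the quadratic cost so that the marginal terms give $g$ and the cross term gives $h$, and observe that $g$ is an integral of pointwise-convex functions while $h$ is a supremum of linear functionals of $T$. If anything you are more careful than the paper on the change-of-coupling step (your disintegration argument for the $\geq$ direction, which the paper leaves implicit) and on the convexity of $h$ (the ``sup of linear functionals'' argument does not require the supremum to be attained, whereas the paper's argument tacitly assumes a maximizing $\gamma^*$ exists).
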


\begin{lemma}{\label{derivative-d}}
	For any $\alpha>0$, the functionals $M$ and $M_N$ are Gateaux-differentiable at any maps in $\mathcal{T}$ and
	there exist couplings $\gamma_{\mu,\nu}\in \Gamma(\mu,\nu)$ and $\gamma_{\mu_i,\nu_i}\in \Gamma(\mu_i,\nu_i)$ such that:
	
	\begin{equation}{\label{D_M}}
	\begin{split}
	&D_\eta M(T)=\int\int \langle \eta(x),T(x)-y\rangle\diff \gamma_{\mu,\nu}(x,y) \diff P(\mu,\nu),\\   &d^2_{\mathcal{W}}(T\#\mu,\nu)=\int  \norm{T(x)-y}^2 \diff \gamma_{\mu,\nu}(x,y)
	\end{split}
	\end{equation}
	and
	\begin{equation}{\label{D_M_N}}
	\begin{split}
	& D_\eta M_N(T)=\frac{1}{N}\sum_{i=1}^N \int \langle\eta(x),T(x)-y\rangle\diff \gamma_{\mu_i,\nu_i}(x,y),\\
	&d^2_{\mathcal{W}}(T\#\mu_i,\nu_i)=\int  \norm{T(x)-y}^2 \diff \gamma_{\mu_i,\nu_i}(x,y).
	\end{split}
	\end{equation}
\end{lemma}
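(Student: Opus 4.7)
The plan is to exploit a cost-preserving reindexing identity to rewrite $d^2_{\mathcal{W}}(T\#\mu,\nu)$ as an infimum over $\Gamma(\mu,\nu)$ (rather than over $\Gamma(T\#\mu,\nu)$) and then to extract the Gateaux derivative via a Danskin/envelope argument. The optimal couplings $\gamma_{\mu,\nu}$ and $\gamma_{\mu_i,\nu_i}$ in the statement emerge as by-products, automatically satisfying the second equations of \eqref{D_M} and \eqref{D_M_N}.

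First, I would establish the identity
\[
d^2_{\mathcal{W}}(T\#\mu,\nu) \;=\; \inf_{\gamma\in\Gamma(\mu,\nu)} \int \norm{T(x)-y}^2 \diff\gamma(x,y).
\]
The ``$\geq$'' direction is immediate: for any $\gamma\in\Gamma(\mu,\nu)$, the pushforward $(T,\id)\#\gamma$ is a coupling of $T\#\mu$ and $\nu$ achieving cost $\int\norm{T(x)-y}^2\diff\gamma$. The ``$\leq$'' direction follows by disintegrating any $\widetilde\gamma\in\Gamma(T\#\mu,\nu)$ with respect to its first marginal $T\#\mu$ and pulling the resulting kernel back through $T$ to produce a coupling of $\mu$ and $\nu$ with the same cost. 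Since $\Omega$ is compact, $\Gamma(\mu,\nu)$ is weakly compact and the cost above is weakly continuous, so the infimum is attained; taking any minimizer $\gamma_{\mu,\nu}$ yields the second identity of \eqref{D_M}, and the same argument applied to each sample yields the second identity of \eqref{D_M_N}.

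Next, fix $T\in\mathcal{T}$ and a perturbation $\eta$, and set $f(t)=d^2_{\mathcal{W}}((T+t\eta)\#\mu,\nu)=\inf_\gamma F(t,\gamma)$ with $F(t,\gamma)=\int\norm{T(x)+t\eta(x)-y}^2\diff\gamma(x,y)$. Substituting the $t=0$ minimizer $\gamma_{\mu,\nu}$ gives the upper envelope bound
\[
\frac{f(t)-f(0)}{t} \;\leq\; \frac{F(t,\gamma_{\mu,\nu})-F(0,\gamma_{\mu,\nu})}{t} \;=\; 2\int\langle\eta(x),T(x)-y\rangle\diff\gamma_{\mu,\nu} \;+\; t\int\norm{\eta(x)}^2\diff\gamma_{\mu,\nu},
\]
and hence the matching limsup as $t\downarrow 0$; an analogous bound holds for $t\uparrow 0$. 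For the lower bound, letting $\gamma^t$ be any minimizer at parameter $t$, the inequality $f(t)-f(0)\geq F(t,\gamma^t)-F(0,\gamma^t)$ yields the same expression with $\gamma^t$ in place of $\gamma_{\mu,\nu}$. \textbf{The main technical step} is then showing $\int\langle\eta,T-y\rangle\diff\gamma^t\to\int\langle\eta,T-y\rangle\diff\gamma_{\mu,\nu}$; I would handle this via a standard subsequence argument: weak compactness of $\Gamma(\mu,\nu)$ extracts $\gamma^t\rightharpoonup\gamma^\ast$ along a subsequence, and lower semicontinuity of $F(0,\cdot)$ together with the two-sided envelope inequalities forces $\gamma^\ast$ to minimize the identity above. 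Uniqueness of this minimizer, which transfers from uniqueness of the Brenier coupling between $T\#\mu$ and $\nu$ (valid because $T\in\mathcal{T}$ is the gradient of a strongly convex function, hence a diffeomorphism, so $T\#\mu$ is absolutely continuous under Assumption~\ref{assumptionMu}), then forces $\gamma^\ast=\gamma_{\mu,\nu}$ and thus $\gamma^t\rightharpoonup\gamma_{\mu,\nu}$. Since the integrand is bounded and continuous on compact $\Omega\times\Omega$, weak convergence promotes to convergence of integrals, producing the matching liminf and hence $f'(0)=2\int\langle\eta(x),T(x)-y\rangle\diff\gamma_{\mu,\nu}$.

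Finally, \eqref{D_M_N} follows from linearity of the finite sum. For \eqref{D_M}, the two-sided envelope bounds above show that the difference quotients $[d^2_{\mathcal{W}}((T+t\eta)\#\mu,\nu)-d^2_{\mathcal{W}}(T\#\mu,\nu)]/t$ are uniformly bounded (for $|t|$ small) by a constant depending only on $\norm{\eta}_\infty$ and $\text{diam}(\Omega)$, so dominated convergence justifies interchanging the limit with the integral against $P$, yielding the claimed formula. The one delicate ingredient, as flagged above, is verifying the weak convergence $\gamma^t\rightharpoonup\gamma_{\mu,\nu}$, which pivots on Brenier uniqueness at $t=0$.
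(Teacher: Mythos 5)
Your proof is correct, and while its overall skeleton (upper envelope bound via the unperturbed optimal coupling, lower bound via the perturbed one, then convergence of the latter to the former) matches the paper's, the two key technical steps are executed genuinely differently. First, you make explicit the reindexing identity $d^2_{\mathcal{W}}(T\#\mu,\nu)=\inf_{\gamma\in\Gamma(\mu,\nu)}\int\norm{T(x)-y}^2\diff\gamma$, whereas the paper works only with the particular coupling $\gamma=(\id,S\circ T)\#\mu$ built from the Brenier map $S$ of $T\#\mu$ to $\nu$; your identity is what turns the argument into a clean Danskin-type computation. Second, and more substantively, for the convergence $\gamma^t\rightharpoonup\gamma_{\mu,\nu}$ the paper argues at the level of maps: it invokes stability of optimal transport maps (Theorem 1.7.7 of Panaretos and Zemel) and continuity of $S_\epsilon$ and $S$ to get $S_\epsilon\circ(T+\epsilon\eta)\to S\circ T$ in $L^2(\mu)$. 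You instead argue at the level of couplings, using weak compactness of $\Gamma(\mu,\nu)$, the near-optimality of $\gamma^t$ for the $t=0$ cost, and uniqueness of the minimizing coupling (which does transfer correctly from Brenier uniqueness: a minimizer must push forward under $(T,\id)$ to the graph coupling of $S$, hence must be concentrated on $\{(x,y):y=S(T(x))\}$). Your route avoids the map-level stability and regularity citations entirely and is more self-contained; the paper's route yields the slightly stronger map-level convergence as a by-product. You also explicitly justify the interchange of limit and integration against $P$ for $D_\eta M$ via uniform boundedness of the difference quotients, a point the paper passes over in silence.
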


\vspace{3mm}

\begin{lemma}{\label{derivative-h}}
	The Gateaux derivative of the functional 
	$h(T)$ defined in Lemma \ref{DC},
	at any map $T\in \mathcal{T}$, in a direction $\eta$, is given by
	\begin{equation*}
		D_\eta h(T) =  2\sum_{i=1}^N \int \langle \eta(x),y \rangle \diff \gamma_i^T(x,y),
	\end{equation*}
	where $\gamma_i^T \in \Gamma(\mu_i,\nu_i)$ is chosen such that for each $i\in\{1,\cdots,N\}$, 
	$$d^2_{\mathcal{W}}(T\#\mu_i,\nu_i)=\int  \norm{T(x)-y}^2 \diff \gamma_i^T(x,y).$$
	Furthermore, if for each $i\in\{1,\cdots,N\}$,there exist an optimal map $S_i$ such that $S_i\#T\#\mu_i=\nu_i$, then $\gamma_i^T=(I, S_i\circ T)\#\mu$, implying
	\begin{equation}{\label{D_h}}
		D_\eta h(T) =  2\sum_{i=1}^N\int \langle \eta(x),S_i\circ T(x) \rangle \diff \mu_i(x).
	\end{equation}
\end{lemma}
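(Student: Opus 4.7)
The plan is to rewrite $h(T)$ as a combination of quantities whose Gateaux derivatives are already accessible, and then appeal to Lemma \ref{derivative-d}. Expanding $\norm{T(x)-y}^2 = \norm{T(x)}^2 + \norm{y}^2 - 2\langle T(x),y\rangle$ against an arbitrary coupling $\gamma \in \Gamma(\mu_i,\nu_i)$ and taking the infimum over $\Gamma(\mu_i,\nu_i)$ yields the identity
\begin{equation*}
d^2_{\mathcal{W}}(T\#\mu_i,\nu_i) = \int \norm{T}^2 \diff\mu_i + \int \norm{y}^2 \diff\nu_i - 2\sup_{\gamma_i \in \Gamma(\mu_i,\nu_i)} \int \langle T(x),y\rangle \diff\gamma_i(x,y),
\end{equation*}
where I rely on the fact that couplings of $T\#\mu_i$ and $\nu_i$ are in correspondence with couplings of $\mu_i$ and $\nu_i$ via push-forward through $T$, so that the Wasserstein infimum can be rephrased on $\Gamma(\mu_i,\nu_i)$. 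Rearranging,
\begin{equation*}
h(T) = \sum_{i=1}^N \Big[ \int \norm{T}^2 \diff\mu_i + \int \norm{y}^2 \diff\nu_i - d^2_{\mathcal{W}}(T\#\mu_i,\nu_i) \Big].
\end{equation*}

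Next I would differentiate term by term. The piece $\int \norm{y}^2 \diff\nu_i$ is constant in $T$; the quadratic $\int \norm{T(x)}^2 \diff\mu_i(x)$ has Gateaux derivative $2\int \langle T(x),\eta(x)\rangle \diff\mu_i(x)$ in direction $\eta$; and for the Wasserstein term I would invoke Lemma \ref{derivative-d} applied to a single datum to obtain $D_\eta d^2_{\mathcal{W}}(T\#\mu_i,\nu_i) = 2\int \langle \eta(x), T(x)-y\rangle \diff\gamma_i^T(x,y)$ for an optimal coupling $\gamma_i^T$ achieving the stated equality. Summing and using that $\gamma_i^T$ has first marginal $\mu_i$, the two $T$-linear pieces cancel and I am left with $D_\eta h(T) = 2\sum_i \int \langle \eta(x),y\rangle \diff\gamma_i^T(x,y)$, which is the general formula asserted by the lemma.

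For the explicit-map statement, suppose an optimal $S_i$ with $S_i\#(T\#\mu_i)=\nu_i$ exists. I would set $\gamma := (I, S_i\circ T)\#\mu_i$ and verify directly that $\gamma \in \Gamma(\mu_i,\nu_i)$ (its first marginal is $\mu_i$ and its second is $(S_i\circ T)\#\mu_i = S_i\#(T\#\mu_i)=\nu_i$). A change of variables $u=T(x)$ then gives
\begin{equation*}
\int \norm{T(x)-y}^2 \diff\gamma(x,y) = \int \norm{u-S_i(u)}^2 \diff(T\#\mu_i)(u) = d^2_{\mathcal{W}}(T\#\mu_i,\nu_i),
\end{equation*}
so $\gamma$ is an admissible choice of $\gamma_i^T$. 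Substituting into the general formula delivers $D_\eta h(T) = 2\sum_i \int \langle \eta(x), S_i\circ T(x)\rangle \diff\mu_i(x)$.

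The argument is essentially an algebraic rewriting coupled with Lemma \ref{derivative-d}, so I do not anticipate a serious obstacle. The one non-routine point, namely the envelope-type selection of the coupling at which the supremum in the definition of $h$ is attained, is already dealt with inside Lemma \ref{derivative-d}; in the present lemma this is merely inherited after peeling off the two easy quadratic terms.
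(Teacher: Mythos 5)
Your argument is correct and is precisely the route the paper intends (it gives no separate proof of this lemma): write $h(T)=g(T)-\sum_i d^2_{\mathcal{W}}(T\#\mu_i,\nu_i)$ via the decomposition already established in Lemma \ref{DC}, differentiate the quadratic term directly and the Wasserstein term via Lemma \ref{derivative-d}, and observe that the $\langle T,\eta\rangle$ terms cancel because $\gamma_i^T$ has first marginal $\mu_i$. The verification that $(I,S_i\circ T)\#\mu_i$ is an admissible optimal coupling likewise matches the construction inside the paper's proof of Lemma \ref{derivative-d}, so there is nothing to add.
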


\begin{algorithm}[H]
\caption{DCA for Minimizing \( g(T) - h(T) \)}
\begin{algorithmic}[1]
    \State Initialize \( T^0 \) and set \( k = 0 \).
    \Repeat
        \State Compute a gradient \( D_\eta h(T^k) \).
        \State Solve the convex subproblem:
        $T^{k+1} = \arg\min_T g(T) - D_T h(T^k) $
        \State Check for convergence: if $\norm{T^{k+1}-T^k}$ is small enough, or if the improvement in the objective is below a threshold, or if $k$ reaches a maximum number of iterations, then stop.
        \State Update \( k \leftarrow k + 1 \).
    \Until{convergence criterion is met}
\end{algorithmic}
\end{algorithm}

\begin{remark}
	The iterative update process for $T^{k+1}$ can be expressed as
	\begin{equation*}
		\begin{split}
			T^{k+1} &= \arg\min_T \{ g(T) - D_T h(T^k) \} \\
			&= \arg\min_T \{ \sum_{i=1}^N \int \|T(x)\|^2 d\mu_i(x) - 2\int \langle T(x), y \rangle d\gamma_i^{T^k}(x, y) \} \\
			&= \arg\min_T \{ \sum_{i=1}^N \int \|T(x) - y\|^2 d\gamma_i^{T^k}(x, y) \} \\
			&= \arg\min_T \{ \sum_{i=1}^N  \int \|T(x) - T_{T^k \# \mu_i \to \nu_i} \circ T^k(x)\|^2 d\mu_i(x) \}
		\end{split}
	\end{equation*}
	Notably, in the one-dimensional scenario, the expression $T_{T^k \# \mu_i \to \nu_i} \circ T^k(x)$ simplifies to $T_{\mu_i \to \nu_i}(x)$.This simplification reveals that the update rule converges in a single iteration, aligning with the formulation presented in equation (6) of \citet{ghodrati2022distribution}.
\end{remark}

\section{Gaussian Case}{\label{Gaussian Case}}

In this section, we focus on a particular instantiation of \eqref{model-d} where both covariate and response measures are non-degenerate centred Gaussian measures on $\R^d$.

In the Gaussian scenario, the Wasserstein distance between two such measures, $\mu = N(0,M)$ and $\nu = N(0,N)$, is expressed as
\begin{equation*}
d^2_{\mathcal{W}}(\mu,\nu) = \text{tr}(M) + \text{tr}(N) - 2\text{tr}\left(\left(N^{1/2}M N^{1/2}\right)^{1/2}\right).
\end{equation*}
Furthermore, when $M$ is injective, the optimal transport map from $\mu$ to $\nu$ can be represented by a $d \times d$ positive definite matrix in closed form:
\begin{equation*}
T_{\mu}^\nu = M^{-1/2} \left(M^{1/2} N M^{1/2}\right)^{1/2} M^{-1/2},
\end{equation*}
which equivalently satisfies
\begin{equation*}
N = T_{\mu}^\nu M T_{\mu}^\nu.
\end{equation*}
Consequently, for Gaussian distributions, the model \eqref{model-d} assumes the form
\begin{equation}
\label{model-gaussian}
N_i = Q_{\epsilon_i}TM_iTQ_{\epsilon_i},
\end{equation}

Define the set of matrices
\begin{equation*}
	S^d_{++}:=\{M\in \R^{d\times d} : \forall x\in \R^d \quad 0<x^\top M x \leq L \}, \quad \text{for } L>0.
\end{equation*}

\noindent The assumptions \ref{assumptionMu}, \ref{assumptionT}, and \ref{noise_map} are now reformulated as follows:
\begin{assumption}{\label{assumption-gaussian}}
The input matrices $M_i$ and the matrix $T$ are symmetric, positive definite with bounded eigen values belonging to the set $S^d_{++}$. Additionally, the matrices $Q_{\epsilon_i}$ are independent random positive definite matrices in $S^d_{++}$ with expected value $\E(Q_{\epsilon_i})=I$.
\end{assumption}

\noindent Given these formulations, the functional $M_N$ takes the form
\begin{equation}
M_N(T) = \sum_{i=1}^N \left[ \text{tr}(TM_iT) + \text{tr}(N_i) - 2\text{tr}\left(\left(N_i^{1/2}TM_i TN_i^{1/2}\right)^{1/2}\right) \right].
\end{equation}
And the estimator $\hat{T}_N$ is defined as the minimizer of $M_N$ within $S^d_{++}$.

The statistical analysis in Section \ref{Statistical Analysis}, is limited to the case where the predictor and response distributions are compactly supported. Here, we extend the analysis to the case where the predictor distributions are Gaussian and supported on $\mathbb{R}^d$.

Leveraging Lemma \ref{derivative-d} and Lemma \ref{derivative-h}, the function $h(T)$ is defined as
\begin{equation*}
h(T) := \sum_{i=1}^N 2\text{tr}\left(\left(N_i^{1/2}TM_i TN_i^{1/2}\right)^{1/2}\right).
\end{equation*}

\begin{lemma}{\label{derivative-gaussian}}
	The Gateaux derivative of $M_N$ and $h$ at $T \in S^d_{++}$ in the direction $\eta$ are
	\begin{equation*}
		D_\eta M_N(T) = \sum_{i=1}^N 2\text{tr}(\eta M_i (T-S_{i} T)),\quad \quad D_\eta h(T) = \sum_{i=1}^N 2\text{tr}(\eta M_i S_{i} T),
	\end{equation*}
	where $S_{i}$ denotes the optimal transport map between $T M_i T$ and $N_i$, formally given by
	\begin{equation*}
	S_{i} = G_{i}^{-1/2} \left(G_{i}^{1/2} N_i G_{i}^{1/2}\right)^{1/2} G_{i}^{-1/2}, \quad \text{where} \quad  G_{i} = T M_i T.
	\end{equation*}
\end{lemma}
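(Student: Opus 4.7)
The plan is to deduce both Gateaux derivatives by specializing Lemmas \ref{derivative-d} and \ref{derivative-h} to the linear Gaussian setting, with a direct square-root-trace differentiation as an independent cross-check.

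First I would split $M_N = g - h$ using Lemma \ref{DC} and differentiate the quadratic part by hand. Expanding $\mathrm{tr}((T + t\eta)M_i(T + t\eta))$ gives the base term $\mathrm{tr}(TM_iT)$ at order $t^0$ and two linear cross terms $\mathrm{tr}(\eta M_iT) + \mathrm{tr}(TM_i\eta)$ at order $t$; these merge into $2\mathrm{tr}(\eta M_iT)$ because $(TM_i\eta)^\top = \eta M_iT$ when $T$, $M_i$, and $\eta$ are symmetric. Summing over $i$ yields $D_\eta g(T) = 2\sum_i \mathrm{tr}(\eta M_iT)$.

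Next I would apply Lemma \ref{derivative-h} to $h$. Brenier's theorem, applied to the non-degenerate Gaussian $T\#\mu_i = N(0, G_i)$ with $G_i = TM_iT$, identifies the optimal map to $\nu_i = N(0, N_i)$ as the symmetric positive definite matrix $S_i = G_i^{-1/2}(G_i^{1/2}N_iG_i^{1/2})^{1/2}G_i^{-1/2}$, exactly as in the statement. With $\eta, T$ acting linearly and $\mu_i = N(0, M_i)$, Lemma \ref{derivative-h} reduces to
\[
D_\eta h(T) \;=\; 2\sum_{i=1}^N \E_{x \sim N(0, M_i)}\!\left[x^\top \eta S_i T x\right] \;=\; 2\sum_{i=1}^N \mathrm{tr}(\eta S_iTM_i),
\]
via the Gaussian moment identity $\E[x^\top A x] = \mathrm{tr}(AM_i)$. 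Using cyclicity of the trace together with the symmetry of $\eta$, so that $\mathrm{tr}(\eta S_iTM_i) = \mathrm{tr}(\eta (S_iTM_i)^\top) = \mathrm{tr}(\eta M_iTS_i)$, converts this into the matching form, and subtracting from $D_\eta g$ produces the stated expression for $D_\eta M_N$.

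The main obstacle is careful bookkeeping of matrix orderings under the trace: the three products $M_iTS_i$, $S_iTM_i$, and $M_iS_iT$ are distinct in general, but the first two coincide against a symmetric test matrix by the transpose trick $\mathrm{tr}(\eta A) = \mathrm{tr}(\eta A^\top)$. As an independent sanity check I would differentiate $\mathrm{tr}((N_i^{1/2}G_iN_i^{1/2})^{1/2})$ directly using the identity $\tfrac{d}{dt}\mathrm{tr}(P^{1/2}) = \tfrac{1}{2}\mathrm{tr}(P^{-1/2}\dot{P})$ (obtained from $\dot{Q}Q + Q\dot{Q} = \dot{P}$ for $Q = P^{1/2}$ by right-multiplication by $Q^{-1}$ and tracing), combined with the identity $N_i^{1/2}(N_i^{1/2}G_iN_i^{1/2})^{-1/2}N_i^{1/2} = S_i$, which follows from uniqueness of the symmetric positive definite solution to $XG_iX = N_i$ (both sides manifestly solve that equation).
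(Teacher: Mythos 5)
Your route is, at bottom, the paper's route: the paper's entire proof consists of the observation that for linear maps and $\mu=N(0,\Sigma)$ one has $\int\langle Ax,Bx\rangle\,\diff\mu(x)=\mathrm{tr}(A^{\top}B\Sigma)$, applied to the general formulas \eqref{D_M_N} and \eqref{D_h} of Lemmas \ref{derivative-d} and \ref{derivative-h}. You do the same specialization, but with the bookkeeping actually written out, and you add a genuinely independent verification by differentiating $\mathrm{tr}\big((N_i^{1/2}TM_iTN_i^{1/2})^{1/2}\big)$ directly via $\tfrac{d}{dt}\mathrm{tr}(P^{1/2})=\tfrac12\mathrm{tr}(P^{-1/2}\dot P)$ together with the identity $N_i^{1/2}(N_i^{1/2}G_iN_i^{1/2})^{-1/2}N_i^{1/2}=S_i$; both of those identities are correct, and that cross-check is more self-contained than the paper's citation.

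There is, however, one point you raise and then fail to resolve, and it should not be smoothed over. Your computation lands on $D_\eta h(T)=2\sum_i\mathrm{tr}(\eta S_iTM_i)=2\sum_i\mathrm{tr}(\eta M_iTS_i)$, and hence $D_\eta M_N(T)=2\sum_i\mathrm{tr}(\eta M_i(T-TS_i))$, whereas the lemma states $\mathrm{tr}(\eta M_iS_iT)$ and $\mathrm{tr}(\eta M_i(T-S_iT))$. As you yourself observe, $M_iTS_i$ and $M_iS_iT$ are not equal against a symmetric test matrix: their difference contributes $\mathrm{tr}(\eta M_i[T,S_i])$, and the symmetric part of $M_i[T,S_i]$ does not vanish unless $T$ and $S_i$ commute. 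So you cannot both note that the three orderings are distinct and assert that your expression "produces the stated expression." The resolution is that your ordering is the correct one and the lemma as printed has $S_i$ and $T$ transposed; the slip originates in the paper's trace identity $\mathrm{tr}(A\Sigma B)$, which is valid only when $B$ is symmetric, while the map being paired with $\eta$ in \eqref{D_h} is the composition $S_i\circ T$, which is a product of two symmetric matrices and hence not symmetric in general. Your direct differentiation settles the matter unambiguously in favor of $\mathrm{tr}(\eta M_iTS_i)$; a complete write-up should say this explicitly rather than claim a literal match with the statement.
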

To evaluate the performance of the estimator $\hat{T}_N$, we use the $\rho$-norm. Note that, in defining \(\rho\), we adopt a slight abuse of notation by using \(T\) to represent the optimal map, expressed as \(T(x) = Tx\).
\begin{theorem}(Rate of Convergence-Gaussian Case){\label{rate_of_convergence-gaussian}}
    Suppose the Assumption \ref{assumption-gaussian} is satisfied. Then
    $$N^{1/2} \rho(\hat{T}_N,T_0) = O_\mathbb{P}(1).$$
\end{theorem}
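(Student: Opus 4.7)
Because $S^d_{++}$ is a compact subset of the finite-dimensional space of symmetric $d\times d$ matrices, the problem reduces to parametric M-estimation; the plan is to combine consistency, a quadratic curvature bound for $M$, and a stochastic equicontinuity bound via an abstract rate theorem (e.g.\ Theorem 3.2.5 of van der Vaart and Wellner) to obtain the $\sqrt{N}$-rate. Consistency $\hat T_N\to T_0$ in probability will follow in the standard way from compactness of $S^d_{++}$, identifiability (Theorem \ref{identifiability-d}), and uniform convergence $M_N\to M$ on $S^d_{++}$; the last point holds because, when $T,M_i,N_i$ have spectra bounded by $L$, the integrand $T\mapsto d_{\mathcal{W}}^2(T\#\mu_i,\nu_i)$ is Lipschitz in $T$ uniformly in $(\mu_i,\nu_i)$.

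The key step is the quadratic curvature bound $M(T)-M(T_0)\gtrsim \rho^2(T,T_0)$ on $S^d_{++}$. The first-order condition $DM(T_0)=0$ follows directly from Lemma \ref{derivative-gaussian}: at $T=T_0$ the optimal map between $TM_iT$ and $N_i$ is $S_i=Q_{\epsilon_i}$ (as $Q_{\epsilon_i}$ is positive definite and hence the gradient of a convex function), and then independence of $M_i$ from $Q_{\epsilon_i}$ together with $\E Q_{\epsilon_i}=I$ forces the derivative to vanish. For the quadratic lower bound, I would differentiate $T\mapsto S_T$ at $T_0$ through the Lyapunov equation governing the matrix square root and verify that the resulting Hessian of $M$ is uniformly positive definite on the compact parameter space. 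An alternative route is to invoke a variance-type inequality for the Fr\'echet mean in Wasserstein space, applied conditionally on $\mu$ to the conditional Fr\'echet mean $T_0\#\mu$, giving
\begin{equation*}
\E\bigl[d_{\mathcal{W}}^2(T\#\mu,\nu)-d_{\mathcal{W}}^2(T_0\#\mu,\nu)\mid\mu\bigr]\gtrsim d_{\mathcal{W}}^2(T\#\mu,T_0\#\mu),
\end{equation*}
and then taking expectation over $\mu$.

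The stochastic equicontinuity bound
\begin{equation*}
\E\sup_{T\in S^d_{++},\,\rho(T,T_0)\le \delta}\sqrt{N}\,\bigl|(M_N-M)(T)-(M_N-M)(T_0)\bigr|\lesssim \delta
\end{equation*}
is routine given the finite-dimensionality of the parameter space: the class $\{(\mu,\nu)\mapsto d_{\mathcal{W}}^2(T\#\mu,\nu):T\in S^d_{++}\}$ has bounded envelope and smooth parameter dependence, and $\rho(T,T_0)$ is equivalent to the Frobenius norm $\|T-T_0\|_F$ on the compact parameter set, so a standard maximal inequality for smooth parametric classes applies. Assembling the three ingredients via the abstract rate theorem with modulus $\phi(\delta)=\delta$ yields $r_N=\sqrt{N}$, proving the claim. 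The main obstacle will be the curvature step: whether via the Bures-Wasserstein variance inequality or direct computation of the Hessian of $M$, it requires handling derivatives of matrix square roots through Lyapunov equations and extracting uniform lower bounds in terms of the spectra of $M_i$, $T_0$, and $Q_{\epsilon_i}$.
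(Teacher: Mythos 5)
Your plan follows essentially the same route as the paper: quadratic growth of $M$ at $T_0$ with respect to $\rho$ via the conditional variance inequality (exactly the paper's inequality \eqref{quadratic-growth-M}, which needs no compactness of $\Omega$ and hence already covers the Gaussian case), combined with a stochastic-equicontinuity modulus $\phi_N(\delta)\lesssim\delta$ and Theorem \ref{theorem3.2.5}, so the step you flag as the main obstacle is in fact resolved by the variance-inequality route you list as an alternative. The only technical divergence is in the equicontinuity bound, which the paper obtains via a mean-value theorem for the Gateaux derivative together with the explicit formula of Lemma \ref{derivative-gaussian} and a $1/\sqrt{N}$ bound on $D_\eta(M_N-M)$, rather than your maximal inequality for a smooth finite-dimensional class; both are standard and lead to the same $\phi_N(\delta)=\delta$.
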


To identify the minimizer of $M_N$ in $S^d_{++}$, the Difference of Convex functions Algorithm (DCA) is employed, and the update rule is thus formulated as
$$T^{K+1} = \arg\min_{T \in S^d_{++}} \sum_{i=1}^N\left(tr(TM_iT)-D_T h(T^k)\right)=\arg\min_{T \in S^d_{++}} \sum_{i=1}^N\left(tr(TM_iT)-2tr(T M_i S_{k,i} T^{k}) \right),$$
where $S_{k,i}$ denotes the optimal transport map between $T^k M_i T^k$ and $N_i$.

\begin{algorithm}[H]\label{DCA-Gaussian}
    \caption{DCA for the Gaussian case}
    \begin{algorithmic}[1]
        \State Initialize \( T^0 \) and set \( k = 0 \).
        \Repeat
            \State Compute for each \( i \in \{1, \ldots, N\} \):
            \begin{itemize}
                \item[] \begin{enumerate}[a.]
                    \item \( G_{k,i} = T^{k} M_i T^{k} \)
                    \item \( S_{k,i} = G_{k,i}^{-1/2} \left( G_{k,i}^{1/2} N_i G_{k,i}^{1/2} \right)^{1/2} G_{k,i}^{-1/2} \)
                \end{enumerate}
            \end{itemize}
            \State Compute \( D_\eta h(T^k) = \sum_{i=1}^N 2\,\text{tr}(\eta M_i S_{k,i} T^{k}) \).
            \State Solve the convex subproblem:
            \begin{align*}
                T^{k+1} &= \arg\min_{T \in S^d_{++}} \Bigg\{ \sum_{i=1}^N \Big( \text{tr}(T M_i T) - 2\,\text{tr}(T M_i S_{k,i} T^{k}) \Big) \Bigg\}
            \end{align*}
            \State Update \( k \leftarrow k + 1 \).
        \Until{\( k \) reaches a maximum number of iterations.}
    \end{algorithmic}
\end{algorithm}

In this case, the DCA provably converges to a global minimizer see \citet[Theorem 1]{yao2023globally}. 

\section{Simulation Study}\label{simulation}

In this section, we conduct a simulation study for the Gaussian case to illustrate the finite sample performance of the method. First, we generate random positive definite matrices \( M_i \). This involves creating an arbitrary random diagonal matrix \( D_i \) with positive entries and a random orthonormal matrix \( U_i \) of the same size, and then computing \( M_i = U_i D_i U_i^\top \). The orthonormal matrix \( U_i \) is sampled from the Haar measure. Additionally, to generate random positive definite matrices \( Q_i \) with a mean identity, we apply the same method, but for the matrices \( D_i \), we use entries drawn from a uniform distribution over [0.5, 1.5]. This ensures all entries are positive and have a mean of 1.

We then use the model \eqref{model-gaussian} to generate matrices \( N_i \). For each experiment, we also set \( T_0 \) as a randomly generated positive definite matrix, which remains fixed throughout. We generated \( N \) pairs of Gaussian distributions in \( \R^d \) using their covariance matrices \( M_i \) and \( N_i \) (refer to Figure \ref{fig:surfaces} for a model illustration). Next, we use Algorithm 2 (DCA for the Gaussian case) to compute the estimator $\hat{T}_N$ for the matrix \( T_0 \). The true map and estimated map for \( N=100 \) in the case of \( d=2 \) are shown in Figure \ref{fig:image1} and Figure \ref{fig:image2}.

To evaluate the effects of the number of pairs of distributions \( N \) and the dimension \( d \) on the estimator error, we considered \( d \) in \{2,3,10,20\} and \( N \) in \{4,8,16,32,64,128,256,512,1024,2048\}. For each pair \( (d, N) \), we generated 50 independent samples of \( N \) pairs of Gaussian distributions in \( \R^d \) using their covariance matrices \( M_i \) and \( N_i \), and computed the estimator \( \hat{T}_N \) for each sample. The logarithm of the squared error between the estimator and the true matrix \( T_0 \) is displayed in box plots in Figure \ref{fig:convergence_rate_Gaussian}. Observing that the medians of the box plots approximately align along a line with a slope of -1/2, we confirm that the rate of convergence is \( N^{-1/2} \) for all dimensions, as predicted by Theorem \ref{rate_of_convergence-gaussian}.

\begin{figure}[H]
	\centering
	\includegraphics[width=0.81\linewidth]{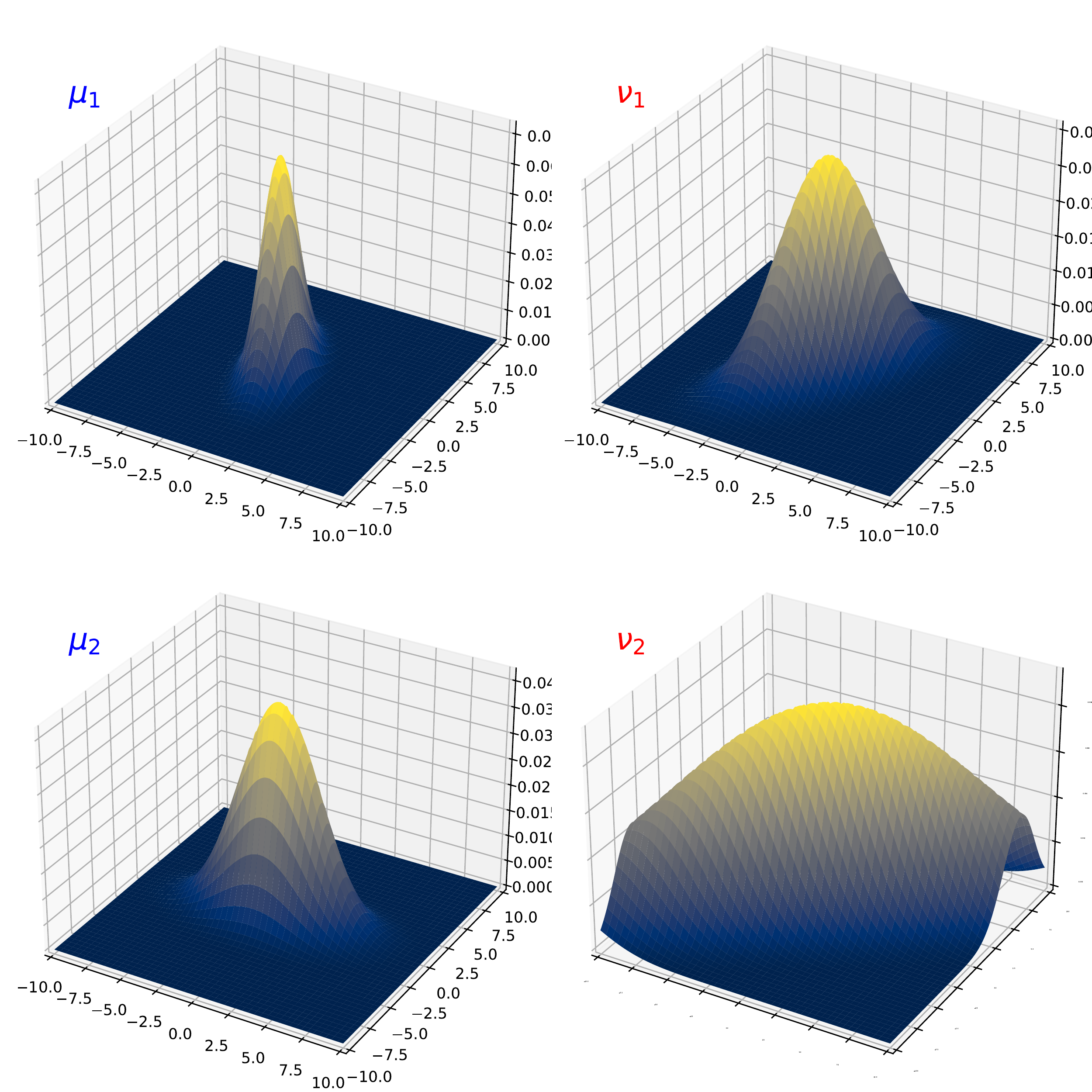}
	\caption{Illustration of Model \eqref{model-gaussian} for $d=2$, showing }
	\label{fig:surfaces}
\end{figure}

\begin{figure}[H]
	\centering
	\begin{minipage}{.5\textwidth}
	  \centering
	  \includegraphics[width=\linewidth]{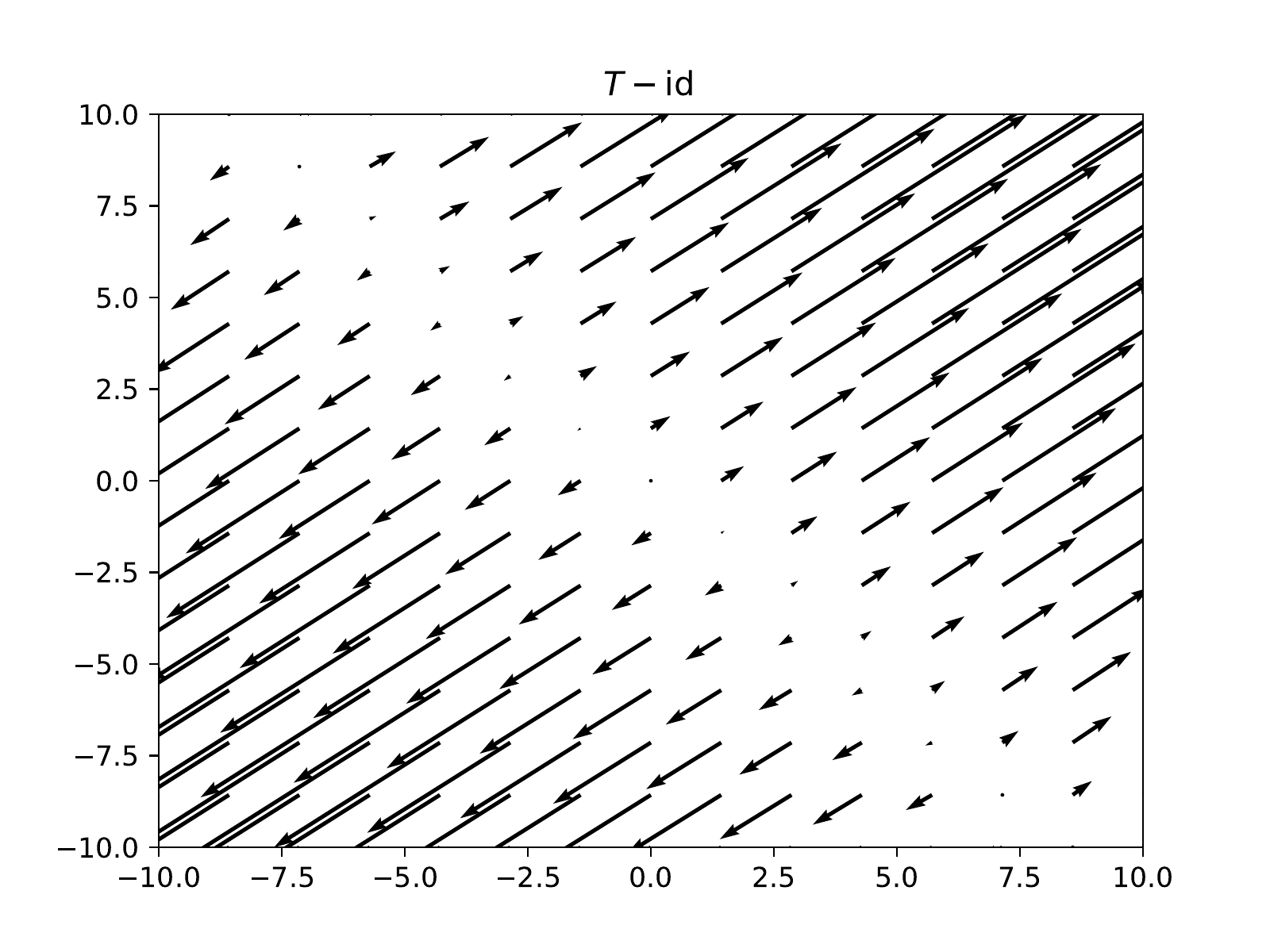}
	  \caption{True vector field}
	  \label{fig:image1}
	\end{minipage}%
	\begin{minipage}{.5\textwidth}
	  \centering
	  \includegraphics[width=\linewidth]{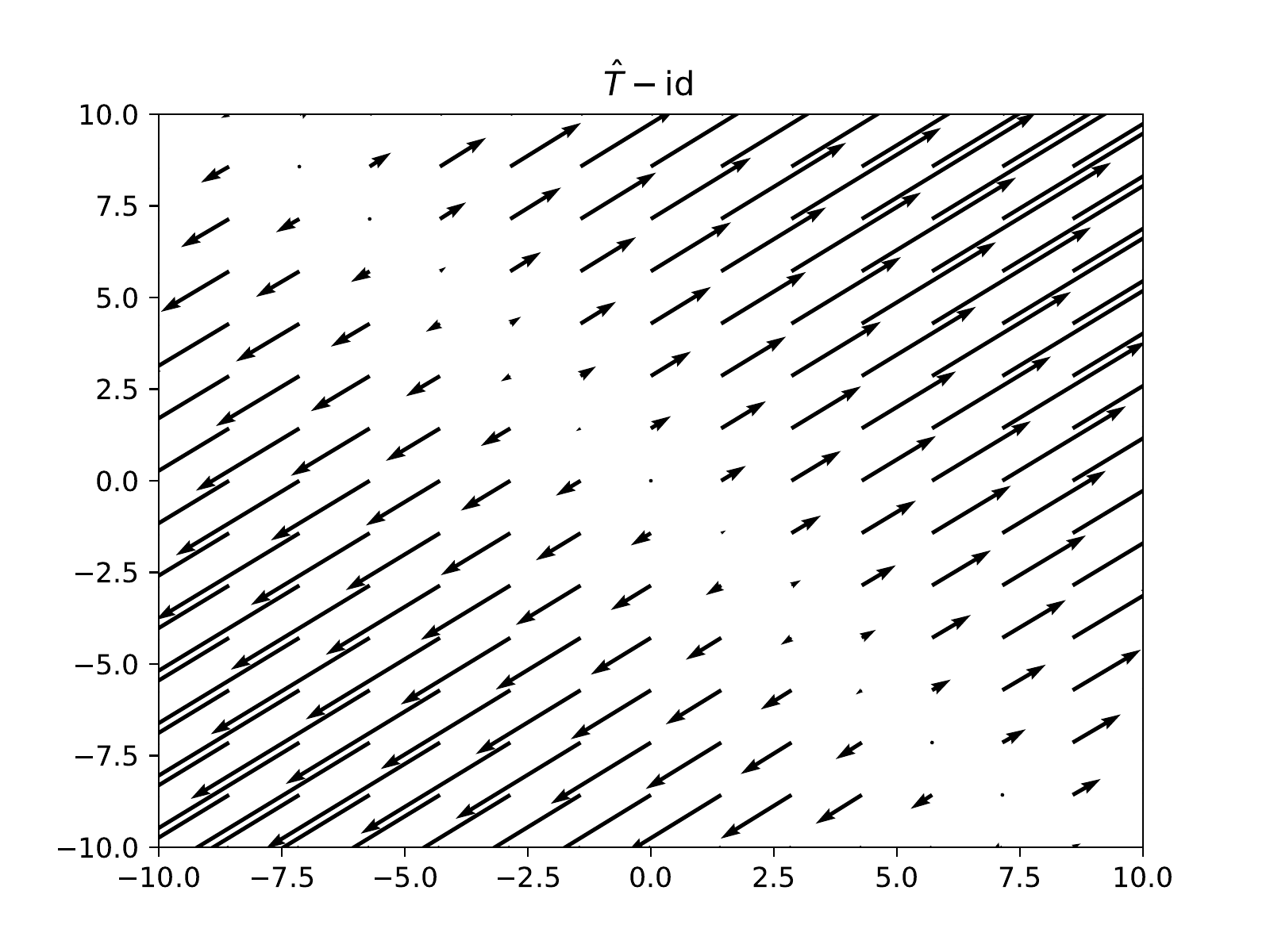}
	  \caption{Estimated vector field}
	  \label{fig:image2}
	\end{minipage}
  \end{figure}

  \begin{figure}[H]
	\centering
	\includegraphics[width=\linewidth]{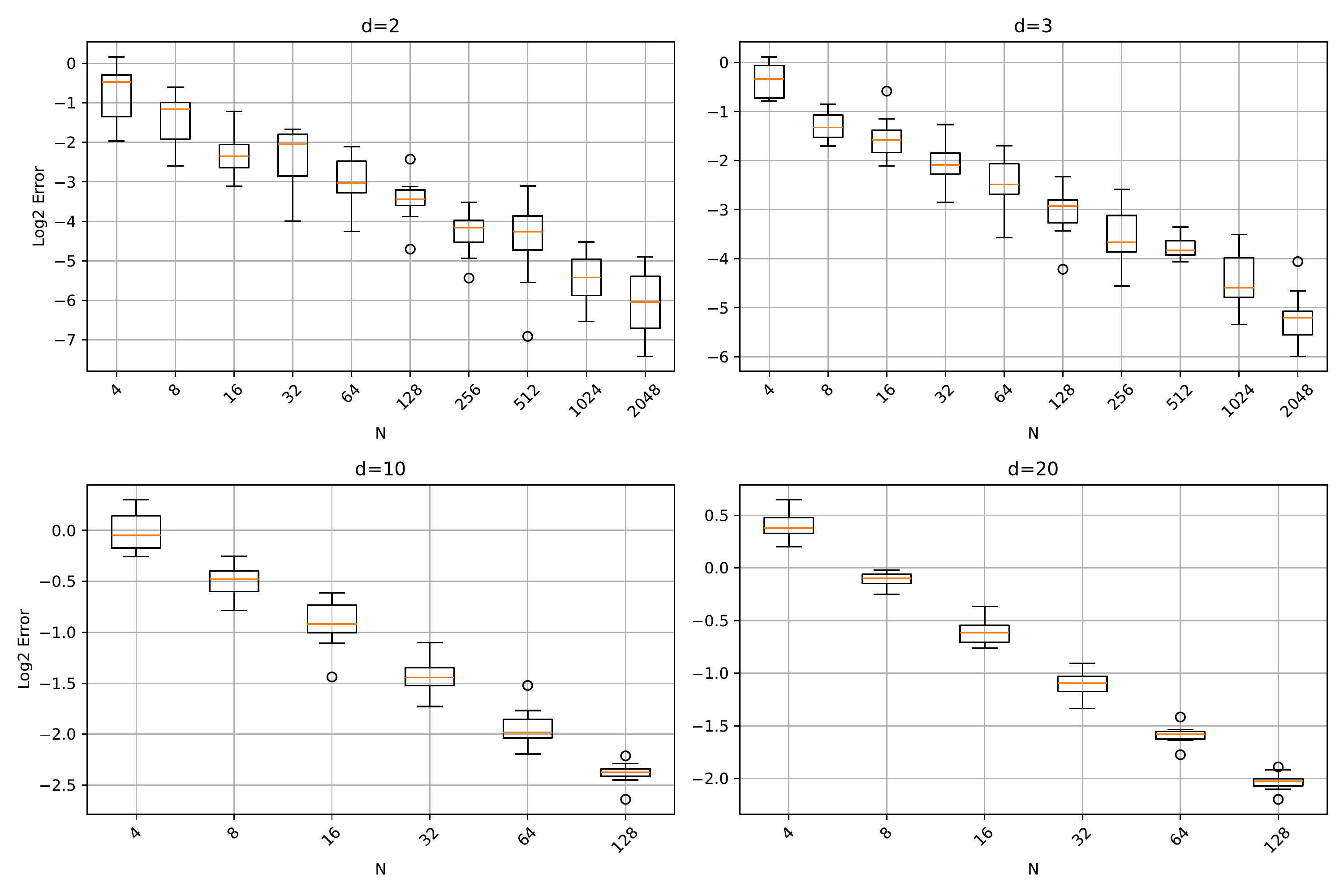}
	\caption{
	 Box plots showing the logarithmic squared error between the estimated matrix $\hat{T}_N$ and the true matrix $T_0$ based on 50 replication for various combination of $d$ and $N$. Given that the number of pairs $N$ are exponents of $2$ for each $d$, the median alignment along a -1/2 slope illustrates the $N^{-1/2}$ convergence rate, in accordance with Theorem \ref{rate_of_convergence-gaussian}.}
	\label{fig:convergence_rate_Gaussian}
\end{figure}

\section*{Acknowledgement}
We wish to thank Ramzi Sofiane Dakhmouche for discussions and comments regarding the DCA algorithm.

\section{Proofs}\label{sec:proofs}

In the next statement, we restate a Theorem from \citet{ponomarev1987submersions} that will be used to prove Lemma \ref{abs_pushforward}.

\begin{theorem}{\label{ponomarev}}
(Theorem 3 of \citet{ponomarev1987submersions})
When $f:\Omega\subset \R^d \to \R^d$ is continuous and almost everywhere differentiable, the following properties are equivalent:
\begin{itemize}
	\item $\mathrm{rank}\{f'(x)\}=d$ for almost all $x\in \Omega$,
	\item the $f$-preimage of any set of measure zero is set of measure zero, i.e. if $E\subset \R^d$ such that $\lambda(E)=0$ then $\lambda(f^{-1}(E))=0$.
\end{itemize}
\end{theorem}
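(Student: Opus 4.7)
The plan is to prove the two implications separately, both resting on a countable decomposition of $\Omega$ together with a change-of-variables inequality relating $|\det f'|$ to the volume distortion produced by $f$.

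For the forward direction ($\mathrm{rank}\{f'(x)\}=d$ a.e.\ implies that $f$-preimages of null sets are null), I would let $E\subset\R^d$ with $\lambda(E)=0$ and denote by $D$ the set of points where $f$ is differentiable with an invertible Jacobian; by hypothesis $\lambda(\Omega\setminus D)=0$. At each $x_0\in D$, writing $\sigma_d(x_0)>0$ for the smallest singular value of $f'(x_0)$, the definition of derivative produces a radius $r(x_0)>0$ with
$$\norm{f(y)-f(x_0)}\ \ge\ \tfrac{1}{2}\sigma_d(x_0)\,\norm{y-x_0}, \qquad \norm{y-x_0}<r(x_0).$$
Partitioning $D$ according to rational lower bounds on $\sigma_d(x_0)$ and lower bounds on $r(x_0)$, and then intersecting with a countable cover of $\Omega$ by small balls, gives a countable decomposition $D=\bigcup_k D_k$ on each piece of which $f$ is bi-Lipschitz with a uniform constant. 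Since bi-Lipschitz maps preserve Lebesgue-null sets in both directions, $\lambda(f^{-1}(E)\cap D_k)=0$ for every $k$, and therefore $\lambda(f^{-1}(E))=0$.

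For the reverse direction I would argue by contrapositive: suppose $A:=\{x\in\Omega:\mathrm{rank}\{f'(x)\}<d\}$ has positive Lebesgue measure. Then $|\det f'(x)|=0$ for every $x\in A$, so the area-type inequality for continuous, a.e.-differentiable maps gives
$$\lambda(f(A))\ \le\ \int_A |\det f'(x)|\,dx\ =\ 0.$$
Thus $f(A)$ is null while $A\subseteq f^{-1}(f(A))$ has positive measure, which contradicts the assumed Luzin $N^{-1}$ property. Hence $\lambda(A)=0$, i.e.\ $f'$ has full rank almost everywhere.

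The main obstacle is the forward direction. The hypotheses only furnish mere continuity and a.e.\ differentiability, so the classical inverse function theorem is not directly applicable. The real work lies in the Lusin/Whitney-type countable decomposition that upgrades pointwise differentiability with an invertible Jacobian to a uniform bi-Lipschitz estimate on each measurable piece, with careful bookkeeping of the indices (a rational lower bound on $\sigma_d$, a radius of validity of the linear approximation, and a geometric cover). The reverse direction is easier in structure but relies on the area formula for functions that are only a.e.\ differentiable, which is a classical but non-trivial ingredient that one would cite rather than rederive.
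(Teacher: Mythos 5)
The paper offers no proof of this statement at all: it is quoted verbatim as Theorem 3 of Ponomarev (1987) and used as a black box in the proof of Lemma \ref{abs_pushforward}, so there is no in-paper argument to compare against, and what you have written is a self-contained proof of the cited result. Your argument is essentially the standard one and is correct in both directions. For the implication from full rank a.e.\ to the Luzin $N^{-1}$ property, the pointwise estimate $\norm{f(y)-f(x_0)}\ge\tfrac12\sigma_d(x_0)\norm{y-x_0}$ on a ball of radius $r(x_0)$ is exactly what differentiability at $x_0$ gives, and the countable decomposition by rational lower bounds on $\sigma_d$ and on $r$, refined by a cover of small balls, produces pieces $D_k$ on which $f$ is injective with a Lipschitz inverse; since Lipschitz maps carry null sets to null sets, $\lambda\bigl(f^{-1}(E)\cap D_k\bigr)=0$ for each $k$, and the countable union together with the null exceptional set finishes the argument. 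One small correction: as described, your decomposition yields only the expanding (co-Lipschitz) bound, not bi-Lipschitzness, because you never impose an upper bound on $\norm{f'(x_0)}$; this is cosmetic, since only the Lipschitz inverse is used, but either drop the word ``bi-Lipschitz'' or add a rational upper bound on the operator norm to the bookkeeping. For the converse, arguing by contrapositive through the area inequality $\lambda^{*}(f(A))\le\int_A|\det f'(x)|\,dx$, valid for maps differentiable at every point of the set $A$ (Rudin, Federer), is the standard route: with $A$ the positive-measure set of differentiability points where the rank is deficient, $f(A)$ is null while $A\subseteq f^{-1}(f(A))$, contradicting the $N^{-1}$ property. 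Citing that inequality rather than rederiving it is reasonable; note that it is itself proved by the same Lusin-type decomposition, so the two halves of your proof are of a piece.
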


\begin{lemma}{\label{abs_pushforward}}
$T\#\mu$ is absolutely continuous, when $\mu$ is absolutely continuous and $T\in \mathcal{T}$.
\end{lemma}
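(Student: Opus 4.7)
The plan is to verify the hypotheses of Theorem \ref{ponomarev} for the map $T$ and then conclude absolute continuity by pulling back null sets through $T$.

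First I would observe that if $T=\nabla\varphi$ with $\varphi\in\Phi$, then $\varphi\in\Phi_\alpha$ for some $\alpha>0$, which means the Hessian of $\varphi$ satisfies $\alpha I\preceq\nabla^2\varphi\preceq LI$. This immediately gives that $T=\nabla\varphi$ is Lipschitz (with constant $L$), hence continuous on $\Omega$, and by Rademacher's theorem differentiable almost everywhere with derivative $DT(x)=\nabla^2\varphi(x)$. Since the eigenvalues of $\nabla^2\varphi(x)$ are bounded below by $\alpha>0$ wherever it exists, $DT(x)$ is positive definite at almost every $x$, and in particular $\mathrm{rank}\{DT(x)\}=d$ almost everywhere.

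Next I would apply Theorem \ref{ponomarev}: since $T$ is continuous, a.e. differentiable, and of full rank a.e., for any Lebesgue-null set $E\subset\mathbb{R}^d$ the preimage $T^{-1}(E)$ is also Lebesgue-null. Now let $E\subset\mathbb{R}^d$ be a Lebesgue-null Borel set. By definition of pushforward,
\begin{equation*}
T\#\mu(E)=\mu\bigl(T^{-1}(E)\bigr).
\end{equation*}
Since $\lambda(T^{-1}(E))=0$ by the previous step, and since $\mu$ is absolutely continuous with respect to $\lambda$ by hypothesis, we conclude $\mu(T^{-1}(E))=0$, hence $T\#\mu(E)=0$. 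This shows $T\#\mu\ll\lambda$, as desired.

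I do not expect a major obstacle: the argument is essentially a direct application of Ponomarev's theorem once one unpacks the regularity hidden in the definition of $\mathcal{T}$. The only subtlety is interpreting $\nabla^2\varphi$ for $\varphi\in\Phi_\alpha$ — strictly speaking the Hessian condition ensures $\varphi\in C^{1,1}$, so $\nabla^2\varphi$ exists only a.e.\ (in the Alexandrov / Rademacher sense), but this is exactly the setting required by Theorem \ref{ponomarev}, so no additional smoothness is needed.
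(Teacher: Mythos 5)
Your argument is correct and is essentially identical to the paper's own proof: both verify that $T=\nabla\varphi$ with $\varphi\in\Phi_\alpha$, $\alpha>0$, has an a.e.\ full-rank derivative, invoke Theorem \ref{ponomarev} to conclude that $T$-preimages of Lebesgue-null sets are null, and then use $T\#\mu(E)=\mu(T^{-1}(E))$ together with $\mu\ll\lambda$. Your version simply spells out the verification of Ponomarev's hypotheses (Lipschitz continuity, a.e.\ differentiability, lower eigenvalue bound) in more detail than the paper does.
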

\begin{proof}
We need to show that if $A$ is a measurable set with Lebesgue measure $\lambda(A)=0$, then $T\#\mu(A) = 0$. We begin by noting that $T \in \mathcal{T}$ implies the Jacobian of $T$ is full rank. Using Theorem \ref{ponomarev}, 
we conclude that $\lambda(T^{-1}(A))=0$.
Next, recall that $T\#\mu(A) = \mu(T^{-1}(A))$. Since $\mu$ is absolutely continuous and $\lambda(T^{-1}(A))=0$, it follows that $\mu(T^{-1}(A))=0$. Therefore, we have $T\#\mu(A) = \mu(T^{-1}(A)) = 0$, as desired.
\end{proof}

Our argument for the identifiability of $T_0$ relies on a result from \citet{chewi2020gradient}, which was originally employed to establish quadratic growth of the Fr\'echet functional \eqref{frechet-functional} around its minimiser. In addition to identifiability, this finding is also applied to exhibit the quadratic growth of functional $M$ around its minimizer, crucial for determining the convergence rate of our proposed estimator. We will start by revisiting the notion of variance inequality, introduced by \citet{ahidar2020convergence}.A distribution $P$ conforms to a variance inequality with a positive constant $C_{var}$, if for any absolutely continuous measure $b\in \mathcal{W}_2(\mathbb{R}^d)$, the following inequality holds:
$$F(b)-F(b^*) \geq \frac{C_{var}}{2} d^2_{\mathcal{W}}(b,b^*),$$
where $b^*$ is the minimizer of $F$ defined by equation \eqref{frechet-functional}. Now we restate the following result from \citet{chewi2020gradient}:

\begin{theorem}\label{vineq}
	(Variance inequality, Theorem 6 of \citet{chewi2020gradient})
	Let $P$ be the law of a random measure in $\mathcal{W}_{2,\mathrm{ac}}(\Omega)$ with barycenter $b^*\in\mathcal{W}_{2,\mathrm{ac}}(\Omega)$. Assume there exists a measurable map $\varphi: \mathcal{W}_{2,\mathrm{ac}}(\Omega) \times \R^d \to \R$ such that for $P$-almost all $\mu$, $\varphi_\mu$ is an optimal Kantorovich potential for $b^*$ to $\mu$ and is $\alpha(T_{b^*\to \mu})$-strongly convex, where $T_{b^*\to \mu}=\nabla \varphi_\mu$ is the corresponding optimal map. Moreover, assume that for almost all $x\in \R^d$
	\begin{equation}{\label{potentials_integral}}
	\E_{\mu \sim P} [\varphi_\mu(x)] =\frac{1}{2}\norm{x}^2.
	\end{equation}
	Then, $P$ satisfies a variance inequality for all $b\in\mathcal{W}_{2,\mathrm{ac}}(\Omega)$ with constant
	$$C_{var}=\int \alpha(T_{b^*\to \mu}) \diff P(\mu).$$
\end{theorem}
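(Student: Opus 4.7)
\textbf{Proof sketch for Theorem \ref{vineq}.} The plan is to combine Kantorovich duality with a strongly-convex version of Young's inequality, and then exploit the assumption \eqref{potentials_integral} to cancel the first-order term, leaving a genuine quadratic lower bound.

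First, I would establish the following pointwise refinement of Young's inequality for the $\alpha$-strongly convex $\varphi_\mu$: setting $\tilde x = \nabla \varphi_\mu^\ast(y)$ and using strong convexity at $\tilde x$ together with $\nabla\varphi_\mu(\tilde x)=y$ and $\varphi_\mu(\tilde x)+\varphi_\mu^\ast(y)=\langle \tilde x,y\rangle$, one obtains
\begin{equation*}
\varphi_\mu(x) + \varphi_\mu^\ast(y) \;\geq\; \langle x, y\rangle + \frac{\alpha(T_{b^\ast\to\mu})}{2}\|x - \nabla\varphi_\mu^\ast(y)\|^2.
\end{equation*}
Integrating this against the optimal coupling $\gamma^\ast_\mu$ of $(b,\mu)$ and using the marginals $b$ and $\mu$ yields, after rearranging the identity $\int\langle x,y\rangle\diff \gamma^\ast_\mu = \tfrac12\int\|x\|^2\diff b + \tfrac12\int\|y\|^2\diff\mu - \tfrac12 d^2_{\mathcal{W}}(b,\mu)$,
\begin{equation*}
\tfrac{1}{2}d^2_{\mathcal{W}}(b,\mu) \;\geq\; \int\!\big[\tfrac12\|x\|^2 - \varphi_\mu(x)\big]\diff b + \int\!\big[\tfrac12\|y\|^2 - \varphi_\mu^\ast(y)\big]\diff \mu + \frac{\alpha(T_{b^\ast\to\mu})}{2}\!\!\int \|x-\nabla\varphi_\mu^\ast(y)\|^2 \diff\gamma^\ast_\mu.
\end{equation*}
For the special case $b = b^\ast$, the optimal coupling is $(x,\nabla\varphi_\mu(x))\#b^\ast$, so $x = \nabla\varphi_\mu^\ast(y)$ $\gamma^\ast_{\mu}$-almost surely and the residual term vanishes; the inequality thus becomes the Brenier identity.

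Subtracting the two displays and averaging over $\mu\sim P$ gives
\begin{equation*}
F(b) - F(b^\ast) \;\geq\; \int\!\big[\tfrac12\|x\|^2 - \E_{\mu\sim P}\varphi_\mu(x)\big]\diff(b-b^\ast)(x) + \frac{1}{2}\int \alpha(T_{b^\ast\to\mu}) \Big[\!\int\|x-\nabla\varphi_\mu^\ast(y)\|^2\diff\gamma^\ast_\mu\Big]\diff P(\mu).
\end{equation*}
The first summand on the right vanishes identically by the standing assumption \eqref{potentials_integral}. For the second summand, observe that since $\nabla\varphi_\mu^\ast \# \mu = b^\ast$, the pushforward of $\gamma^\ast_\mu$ under $(x,y)\mapsto (x,\nabla\varphi_\mu^\ast(y))$ is a coupling between $b$ and $b^\ast$, so $\int\|x-\nabla\varphi_\mu^\ast(y)\|^2\diff\gamma^\ast_\mu \geq d^2_{\mathcal{W}}(b,b^\ast)$. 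Pulling $d^2_{\mathcal{W}}(b,b^\ast)$ outside the $P$-integral yields the variance inequality with constant $C_{var} = \int \alpha(T_{b^\ast\to\mu})\diff P(\mu)$.

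The main subtlety is the first step: producing the strong-convex Young inequality in the exact form needed and verifying that the residual term really is bounded below by $d^2_{\mathcal{W}}(b,b^\ast)$ rather than merely by some $L^2$-type discrepancy depending on $\mu$. The absolute-continuity hypotheses on $b$ and $b^\ast$ (which guarantee Brenier maps and a deterministic optimal coupling) and the measurability of $\mu\mapsto\varphi_\mu$ (which legitimises interchanging $\E_{\mu}$ with integrals in $x$) are what make the argument go through; cancelling the linear term via \eqref{potentials_integral} is what converts a tautological $F(b)\geq F(b^\ast)$ into a genuine quadratic growth bound.
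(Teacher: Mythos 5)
Your proposal is correct. Note that the paper itself gives no proof of this statement --- it is imported verbatim, with citation, from \citet{chewi2020gradient} --- and your argument reproduces the standard proof of that result: the strongly convex refinement of Young's inequality for $\varphi_\mu$ and $\varphi_\mu^*$, integration against the optimal coupling of $(b,\mu)$, exact cancellation of the linear term via the barycentric normalisation $\E_{\mu}[\varphi_\mu]=\tfrac12\|\cdot\|^2$, and the observation that $(x,y)\mapsto(x,\nabla\varphi_\mu^*(y))$ pushes the optimal coupling forward to a coupling of $b$ and $b^*$, so the residual dominates $d^2_{\mathcal{W}}(b,b^*)$.
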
 

\begin{proof}[Proof of Theorem \ref{identifiability-d}]
	To prove that $T_0$ is the unique minimizer of the population functional in $\mathcal{T}$ up to $Q$-null sets, fix a measure $\mu_0$ in the support of $P_M$, and let $\nu$ be a random measure such that $\nu=T_{\epsilon}\#(T_0\#\mu_0)$ (where we recall that $\E(T_{\epsilon})=\id$). According to Lemma \ref{abs_pushforward}, $T_0 \# \mu_0$ is absolutely continuous, since $\mu_0$ is absolutely continuous, and $T_0\in \mathcal{T}_\alpha$. 
	
	Similarly, we can argue that $\nu$ is also absolutely continuous because $T_\epsilon \in \mathcal{T}$. Therefore according to Proposition 3.2.7 of \citet{panaretos2020invitation}, for any $\mu_0$, the induced random measure $\nu$ has a unique Fr\'echet mean. By Theorem 4.2.4 of \citet{panaretos2020invitation} we conclude this unique Fr\'echet mean is $T_0 \# \mu_0$. Therefore, for any $\mu_0$,
	
	$${\arg\inf}_{b\in \mathcal{W}_2(\Omega)} \int_{\mathcal{W}_2(\Omega)} d^2_{\mathcal{W}}(b,\nu) \diff P(\nu|\mu_0)=T_0\# \mu_0,$$
	where $P$ is the joint distribution of $(\mu,\nu)$ induced by Model \eqref{model-d}. Now we show that $T_0$ is a minimiser of $M$:
	
	\begin{equation*}
	\begin{split}
	M(T)&=\int d^2_{\mathcal{W}}(T\# \mu,\nu) \diff P(\mu,\nu)\\
	&=\int \int d^2_{\mathcal{W}}(T\#\mu_0,\nu)\diff P(\nu|\mu_0) \diff P(\mu_0)\\
	&\geq \int \int d^2_{\mathcal{W}}(T_0\#\mu_0,\nu)\diff P(\nu|\mu_0) \diff P(\mu_0)\\
	&=\int d^2_{\mathcal{W}}(T_0\# \mu,\nu) \diff P(\mu,\nu).
	\end{split}
	\end{equation*}
	We will show that $T_0$ is the unique minimizer up to $L^2(Q)$-norm. We apply Theorem \ref{vineq} in the following way: fix a measure $\mu_0$ in the support of $P_M$. As we stated, $T_0\#\mu_0$ is the conditional Fr\'echet mean of the random measure $\nu$ given $\mu_0$ (i.e. the baruycentre of the law $P(\nu|\mu_0)$). Define a mapping {$\varphi: \mathcal{W}_{2,\mathrm{ac}}(\Omega)\times \R^d \to \R$} such that $\varphi_{\nu|\mu_0}$ is equal to the Kantorovich potential of the optimal map $T_\epsilon$, where we abuse notation for tidiness and write $\nu|\mu_0\equiv T_\epsilon\#T_0\#\mu_0$ (the existence of such map $T_\epsilon$ is guaranteed by the model assumptions). The mapping $\varphi$ satisfies the assumptions of Theorem \ref{vineq} because the Kantorovich potential of $T_\epsilon$ is indeed an optimal Kantorovich potential from $T_0\# \mu_0$ to $\nu|\mu_0$ and each $T_\epsilon$ is the gradient of an $\alpha$-strongly convex function by assumption \ref{noise_map}. Furthermore, based on the same assumption, $\E (T_\epsilon) = \id$, and as a result, equation \eqref{potentials_integral} holds. 
	We can deduce that the mapping $\varphi_{\nu|\mu_0}$ is $\alpha(T_\epsilon)$-strongly convex. We can also observe that the value $\E\alpha(T_\epsilon)$ no longer depends on $\mu_0$. Additionally, we can deduce $\E \alpha(T_\epsilon)>0$ since each map $T_\epsilon$ is strongly convex.
	
	Collecting and combining the statements above, we can apply Theorem \ref{vineq} and show that for any $T\in \mathcal{T}$:
	
	\begin{equation}{\label{quadratic-growth-M}}
	\begin{split}
	M(T)-M(T_0) &=\frac{1}{2}\int\int [d^2_{\mathcal{W}}(T\#\mu_0,\nu)-d^2_{\mathcal{W}}(T_0\#\mu_0,\nu)] \diff P(\nu|\mu_0)\diff P_M(\mu_0)\\
	&\geq \frac{\E \alpha(T_\epsilon)}{2}\int  d^2_{\mathcal{W}}(T\#\mu_0,T_0\#\mu_0)\diff P_M(\mu_0).
	\end{split}
	\end{equation}
	Consequently, if for some $T$ we have that $M(T)=M(T_0)$,  inequality \eqref{quadratic-growth-M} implies that
	$$P_M\{\mu \text{ such that } d^2_{\mathcal{W}}(T\#\mu,T_0\#\mu)=0\} = 1.$$
	Since both $T_0$ and $T$ are optimal maps, whenever $d^2_{\mathcal{W}}(T\#\mu,T_0\#\mu)=0$ we can infer that $\norm{T-T_0}^2_{L^2(\mu)}=0$. Therefore
	$$P_M\{\mu \text{ such that } \norm{T-T_0}^2_{L^2(\mu)}=0\} = 1,$$
	which is equivalent to $\norm{T-T_0}^2_{L^2(Q)}=0$. Therefore, $T_0$ is the unique minimizer of $M$ up to $L^2(Q)$-norm, and hence identifiable up to $Q$-null sets.

\end{proof}

\begin{lemma}\label{embedding}
	If $\beta,\gamma>0$ are such that $\floor{\beta+\gamma} = \floor{\beta}$, then for any $f\in C^{\beta+\gamma}$ we have
	$$\norm{f}_{C^{\beta}} \leq 3 \norm{f}_{C^{\beta+\gamma}}.$$
\end{lemma}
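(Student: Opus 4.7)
\textbf{Proof proposal for Lemma \ref{embedding}.}

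The plan is to directly compare the two Hölder norms term by term. Since $\lfloor \beta + \gamma \rfloor = \lfloor \beta \rfloor$, the sup-norm part of $\|f\|_{C^\beta}$ coincides exactly with that of $\|f\|_{C^{\beta+\gamma}}$, so it is enough to control, for each multi-index $\mathbf{k}$ with $|\mathbf{k}| = \lfloor \beta \rfloor$, the seminorm
$$\sup_{x \neq y} \frac{|D^{\mathbf{k}} f(x) - D^{\mathbf{k}} f(y)|}{\|x-y\|^{s}}, \qquad s := \beta - \lfloor \beta \rfloor,$$
by the analogous quantity with exponent $s + \gamma$ (which lies in $[0,1)$ by hypothesis) plus lower-order terms.

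The main idea is to split the supremum into two regimes at the threshold $\|x - y\| = 1$. For $0 < \|x-y\| \leq 1$, since $s \leq s + \gamma$, we have $\|x-y\|^{s} \geq \|x-y\|^{s+\gamma}$, so the $C^\beta$ seminorm ratio is pointwise bounded by the $C^{\beta+\gamma}$ seminorm ratio. For $\|x-y\| > 1$, the denominator $\|x-y\|^{s} \geq 1$, so the ratio is bounded by the numerator, which in turn is at most $2 \sup_x |D^{\mathbf{k}} f(x)|$ by the triangle inequality. Combining these two regimes gives
$$\sup_{x\neq y} \frac{|D^{\mathbf{k}}f(x)-D^{\mathbf{k}}f(y)|}{\|x-y\|^{s}} \; \leq \; \sup_{x\neq y} \frac{|D^{\mathbf{k}}f(x)-D^{\mathbf{k}}f(y)|}{\|x-y\|^{s+\gamma}} \;+\; 2\sup_{x}|D^{\mathbf{k}}f(x)|,$$
where I use $\max\{a,b\} \leq a + b$ for nonnegative $a,b$ to avoid a case split at the end.

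Finally I would take the maximum over multi-indices $|\mathbf{k}| = \lfloor \beta \rfloor$ and add back the zeroth-order contribution $\max_{|\mathbf{k}| \leq \lfloor \beta \rfloor} \sup_x |D^{\mathbf{k}}f(x)|$, which appears in both norms. The extra term $2\max_{|\mathbf{k}|=\lfloor\beta\rfloor} \sup_x |D^{\mathbf{k}}f(x)|$ is absorbed into the sup-norm part of $\|f\|_{C^{\beta+\gamma}}$ by monotonicity of the maximum over the nested index sets, yielding the constant $1 + 2 = 3$. No step appears truly delicate; the only real subtlety is making sure the threshold $\|x-y\|=1$ is handled correctly in both directions, which is the reason the constant $3$ (rather than $1$) appears on the right-hand side. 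The argument applies verbatim whether or not $\beta$ is an integer (the case $s = 0$ simply makes the first regime vacuous and the second gives the bound directly via the sup norm).
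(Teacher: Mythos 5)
Your proof is correct and follows essentially the same route as the paper's: split the Hölder seminorm at the threshold $\norm{x-y}=1$, bound the far regime by $2\sup_x|D^{\bk}f(x)|$ and the near regime by the $(\beta+\gamma)$-seminorm via monotonicity of the exponent, then absorb the extra sup-norm term to obtain the constant $3$. No gaps.
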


\begin{proof}
	Recall that 
	\begin{equation}
	\norm{f}_{C^\beta}:= \max_{|\bk|\leq \floor{\beta}}\norm{D^{\bk}}_{\infty}+ \max_{|\bk|=\floor{\beta}} \sup_{x \neq y} \frac{ |D^{\bk}f(x)-D^{\bk}f(y)|}{\norm{x-y}^{\beta-\floor{\beta}}}.
	\end{equation}

	\noindent First, let's compare the expressions for $\norm{f}_{C^\beta}$ and $\norm{f}_{C^{\beta+\gamma}}$. For a function $g$, we have
	 $$\sup_{ x,y \in \Omega , x \neq y} \frac{ |g(x) -g(y)|}{\norm{x-y}^{\beta-b}}\le \sup_{ x,y \in \Omega , \norm{x -y}<1} \frac{ |g(x) -g(y)|}{\norm{x-y}^{\beta-b}}+\sup_{ x,y \in \Omega , \norm{x-y}\ge1} \frac{ |g(x) -g(y)|}{\norm{x-y}^{\beta-b}}.$$
	When $\norm{x-y}\geq 1$, we have $\norm{x-y}^{\beta-b}\geq 1$, therefore
	 $$\sup_{ x,y \in \Omega , \ge1} \frac{ |g(x) -g(y)|}{\norm{x-y}^{\beta-b}} \le \sup_{ x,y \in \Omega , \norm{x-y}\ge1}  |g(x) -g(y)| \le 2\|g\|_\infty,$$
	 $$\sup_{ x,y \in \Omega , \ge1} \frac{ |g(x) -g(y)|}{\norm{x-y}^{\beta-b}} \le \sup_{ x,y \in \Omega , \ge1}  |g(x) -g(y)| \le 2\|g\|_\infty,$$
	 $$\sup_{ x,y \in \Omega , \ge1} \frac{ |g(x) -g(y)|}{\norm{x-y}^{\beta-b}} \le \sup_{ x,y \in \Omega , \norm{x-y}\ge1}  |g(x) -g(y)| \le 2\|g\|_\infty,$$
	but whenever $\norm{x-y}< 1$, we have $\norm{x-y}^{\gamma}<1$, therefore $\norm{x-y}^{\beta+\gamma-b}\le\norm{x-y}^{\beta-b},$ so we obtain, $$\sup_{ x,y \in \Omega , \norm{x-y}<1} \frac{ |g(x) -g(y)|}{\norm{x-y}^{\beta-b}}\le \sup_{ x,y \in \Omega , \norm{x-y}<1} \frac{ |g(x) -g(y)|}{\norm{x-y}^{\beta+\gamma-b}}\le \sup_{ x,y \in \Omega , x \neq y} \frac{ |g(x) -g(y)|}{\norm{x-y}^{\beta+\gamma-b}}.$$
	It follows that
	\begin{equation}\label{holder-embedding}
		\sup_{ x,y \in \Omega , x \neq y} \frac{ |g(x) -g(y)|}{\norm{x-y}^{\beta-b}}\le 2\norm{g}_\infty+\sup_{ x,y \in \Omega , x \neq y} \frac{ |g(x) -g(y)|}{\norm{x-y}^{\beta+\gamma-b}}.
	\end{equation}
	Moreover, as $\floor{\beta+\gamma} = \floor{\beta}=b$, we have
	\begin{equation}\label{right-hand-side-holder}
	\max_{|\bk|\leq \floor{\beta+\gamma}}\norm{D^{\bk} f}_{\infty}=\max_{|\bk|\leq \floor{\beta}}\norm{D^{\bk} f}_{\infty}\geq \max_{|\bb|=b}\norm{D^{\bb} f}_\infty.
	\end{equation}
	
	Therefore, using inequality \eqref{right-hand-side-holder}, and inequality \eqref{holder-embedding} for a function $g$ of the form $D^{\bb} f$, where $\bb$ is a vector such that $|\bb|=b$, we obtain
	\begin{equation}
		\begin{split}
		\norm{f}_{C^{\beta}} &=  \max_{|\bk|\leq \floor{\beta}}\norm{D^{\bk} f}_{\infty}+\max_{|\bb|=b} \sup_{x \neq y} \frac{ |D^{\bb} f(x)-D^{\bb} f(y)|}{\norm{x-y}^{\beta-b}}\\
		&\leq \max_{|\bk|\leq \floor{\beta+\gamma}}\norm{D^{\bk} f}_{\infty}+\max_{|\bb|=b} \bigg[ 2\norm{D^{\bb} f}_\infty+
		\sup_{ x,y \in \Omega , x \neq y} \frac{ |D^{\bb} f(x) -D^{\bb} f(y)|}{\norm{x-y}^{\beta+\gamma-b}}\bigg]\\
		&\leq 3\norm{f}_{C^{\beta+\gamma}}.
		\end{split}
	\end{equation}
	
\end{proof}

\begin{proof}[Proof of Theorem \ref{existence_estimator}]
	\vspace{5mm}
	Suppose we have a sequence $\{T_n \in \mathcal{T}_{\beta,\gamma,R}\}$ that converges to a minimizer of $M_N$. Let us consider the corresponding sequence of convex potential functions,
	 $$\left\{\varphi_n: T_n = \nabla \varphi_n, \varphi_n \in \Phi \cap\overline{C^{\beta+\gamma}_R}^{\norm{.}_{C^\beta}}\right\}.$$ 
	 Since $C_R^{\beta+\gamma}$ is precompact in $C^{\beta}$ (as shown in Lemma 6.33 of \citet{gilbarg1977elliptic}), there exists a subsequence $\varphi_{n_k}$ converging to a function $\varphi$ in $C^{\beta}$. Moreover, since $\beta>2$, and convergence in $\beta$-H\"older norm implies convergence of second-order derivatives, and since $\{\varphi_n\} \subset \Phi$, we conclude $\varphi\in\Phi_0$.
	
	Since the norm $\norm{\cdot}_{C^{\beta}}$ is continuous with respect to its own induced topology, and $\norm{\varphi_n}_{C^\beta} \leq 3\norm{\varphi_n}_{C^{\beta+\gamma}}\leq 3 R$, we can also infer that $\norm{\varphi}_{C^{\beta}} \leq 3 R$.
		
	Next, let's consider the map $T=\nabla\varphi$ which consequently is in $\mathcal{T}_{\beta,3R}$. Given that the functional $M$ is continuous in $T$ and with respect to $L^2(Q)$-topology (which can be deduced using the triangle inequality and inequality \eqref{norm_comparison}), and since convergence in H\"older norm is stronger than convergence in $L^2(Q)$, we can conclude that the initial sequence of maps minimizing $M_N$, converges to $T$.	

\end{proof}

\begin{proof}[Proof of Lemma \ref{semi-metric}]
	It is trivial that $\rho(T,T)=0$ for any $T$. We can show that $\rho$ satisfies the triangle inequality as follows:
	\begin{equation*}
	\begin{split}
	& \big( \rho(T_1,T_2) + \rho(T_2,T_3)\big)^2\\
	&= \rho(T_1,T_2)^2+\rho(T_2,T_3)^2+2 \rho(T_1,T_2)\rho(T_2,T_3)\\
	&= \rho(T_1,T_2)^2+\rho(T_2,T_3)^2+2 \Big[\int d^2_{\mathcal{W}}(T_1\#\mu,T_2\#\mu)\diff P_M(\mu)\Big]^{1/2} \Big[\int d^2_{\mathcal{W}}(T_1\#\mu,T_2\#\mu)\diff P_M(\mu)\Big]^{1/2}\\
	&\geq \rho(T_1,T_2)^2+\rho(T_2,T_3)^2+2  \int d_{\mathcal{W}}(T_1\#\mu,T_2\#\mu)d_{\mathcal{W}}(T_1\#\mu,T_2\#\mu)\diff P_M(\mu) \\
	&= \int d^2_{\mathcal{W}}(T_1\#\mu,T_2\#\mu) + d^2_{\mathcal{W}}(T_2\#\mu,T_3\#\mu)+2 d_{\mathcal{W}}(T_1\#\mu,T_2\#\mu)d_{\mathcal{W}}(T_1\#\mu,T_2\#\mu)\diff P_M(\mu)\\
	&\geq \int d^2_{\mathcal{W}}(T_1\#\mu,T_3\#\mu)\diff P_M(\mu)\\
	&=\rho^2(T_1,T_3),
	\end{split}
	\end{equation*}
	where the first inequality is by Cauchy-Schwarz.
\end{proof}

To derive the convergence rate for the estimator, we will make use of some theorems from M-estimation \citep{van1996weak}. For the reader's convenience, we will first restate these theorems (with minor modifications to more easily relate to our context). Furthermore, we restate a  theorem from \citet{gunsilius2022convergence} that will be essential for determining the rate.

\begin{lemma}[Bracketing Entropy of H\"older Class, Corollary 2.7.4 \cite{van1996weak} ]{\label{metric-entropy-CL}}
Let $\Omega$ be a bounded, convex subset of $\R^d$ with a nonempty interior. There exists a constant $K$, depending only on $\beta$, $\text{vol}({\Omega})$, $r$ and $\rho$ such that,

$$\log N_{[]}(\epsilon,C^{\beta}_R(\Omega),L^r(Q))\leq KR^{d/{\beta}}{\epsilon}^{-d/{\beta}},$$
for every $r\geq 1$,$\epsilon>0$, and probability measure $Q$ on $\R^d$.
\end{lemma}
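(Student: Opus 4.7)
The plan is to invoke the cited corollary from \cite{van1996weak} directly; nonetheless, I would outline the two classical ingredients behind it, since this makes transparent both why the exponent $d/\beta$ arises and why the bound is uniform in $r$ and in the probability measure $Q$.

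First, I would establish the Kolmogorov--Tikhomirov estimate for the uniform covering number of the H\"older ball. Partition $\Omega$ into axis-aligned cubes of side length $\delta \sim (\epsilon/R)^{1/\beta}$; there are $\lesssim \mathrm{vol}(\Omega)\,\delta^{-d}$ such cubes. For each $f \in C^\beta_R(\Omega)$, replace $f$ on each cube by its Taylor polynomial of degree $\lfloor \beta \rfloor$ centred at the cube midpoint. The H\"older bound on the $\lfloor \beta \rfloor$-th derivatives gives a pointwise approximation error of order $R\,\delta^{\beta}\lesssim \epsilon$ on each cube. Discretising each polynomial coefficient on a grid of appropriate width then yields a finite family of approximants whose log-cardinality is bounded by $K R^{d/\beta}\epsilon^{-d/\beta}$, with $K$ depending only on $\beta$, $d$, and $\mathrm{vol}(\Omega)$. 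This is the uniform covering bound of Kolmogorov--Tikhomirov as restated in, e.g., Theorem~2.7.1 of \cite{van1996weak}.

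Second, I would convert the sup-norm cover into an $L^r(Q)$-bracketing. If $\{f_1,\ldots,f_M\}$ is a uniform $\epsilon$-net of $C^{\beta}_R(\Omega)$, then for every $f$ in the ball there is a $j$ with $\|f-f_j\|_\infty\leq \epsilon$, so the brackets $[f_j-\epsilon,\,f_j+\epsilon]$ cover $C^{\beta}_R(\Omega)$. The $L^r(Q)$-width of any such bracket satisfies
\[
\|(f_j+\epsilon)-(f_j-\epsilon)\|_{L^r(Q)} \;=\; 2\epsilon\, Q(\Omega)^{1/r} \;\leq\; 2\epsilon,
\]
because $Q$ is a probability measure. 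Rescaling $\epsilon\mapsto\epsilon/2$ and absorbing the factor $2^{d/\beta}$ into $K$ yields the stated bracketing entropy bound uniformly in $r\geq 1$ and in the choice of $Q$.

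The main (and essentially only nontrivial) obstacle is the Kolmogorov--Tikhomirov covering estimate itself, which requires the careful polynomial-approximation construction sketched in the second paragraph; the passage from covering to bracketing is almost free and explains why neither $r$ nor $Q$ appear in the exponent of the final bound. Since the entire statement is a direct quotation of Corollary~2.7.4 of \cite{van1996weak}, the formal proof in the paper is simply a reference to that source.
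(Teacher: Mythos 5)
Your proposal is correct and consistent with the paper, which offers no proof of this lemma at all: it is quoted verbatim as Corollary 2.7.4 of \cite{van1996weak}, exactly as you note in your final paragraph. Your sketch of the two underlying ingredients --- the Kolmogorov--Tikhomirov uniform covering bound for the H\"older ball via piecewise Taylor approximation, and the conversion of a $\|\cdot\|_\infty$-net $\{f_j\}$ into brackets $[f_j-\epsilon, f_j+\epsilon]$ of $L^r(Q)$-width $2\epsilon$ uniformly over probability measures $Q$ and $r\geq 1$ --- is the standard and correct route to that corollary, so there is nothing to fix.
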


\begin{theorem}[\citet{van1996weak}, Theorem 3.2.5]{\label{theorem3.2.5}}
	Let $M_N$ be a stochastic process indexed by a semi-metric space $\Theta$ with semi-metric $\rho$, and let $M$ be a deterministic function, such that for every $\theta$ in a neighborhood of $\theta_0$,
	$$M(\theta)-M(\theta_0)\gtrsim d^2(\theta,\theta_0).$$
	Suppose that, for every $N$ and sufficiently small $\delta$,
	
	$$\E^* \sup_{d^2(\theta,\theta_0)<\delta} \sqrt{N}\big|(M_N-M)(\theta)-(M_N-M)(\theta_0)\big|\lesssim \phi_N(\delta),$$
	for functions $\phi_N$ such that $\delta \to \phi_N(\delta)/\delta^\alpha$ is decreasing for some $\alpha<2$ (not depending on $N$). Let
	$$r_N^2\phi_N\left(\frac{1}{r_N}\right)\leq \sqrt{N}, \quad \text{for every } N.$$
	If the sequence $\hat{\theta}_N$ satisfies $M_N(\hat{\theta}_N)\leq M_N(\theta_0)+O_P(r_N^{-2})$, and converges in outer probability to $\theta_0$, then $r_N d(\hat{\theta}_N,\theta_0)=O^*_P(1)$. If the displayed conditions are valid for every $\theta$ and $\delta$, then the condition that $\hat{\theta}_N$ is consistent is unnecessary.
	
\end{theorem}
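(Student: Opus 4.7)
The plan is to apply Theorem \ref{theorem3.2.5} of \citet{van1996weak} (restated earlier) with parameter space $\Theta = S^d_{++}$, reference point $T_0$, and the semi-metric $\rho$ from Lemma \ref{semi-metric}. Three ingredients need to be verified: (i) a quadratic growth inequality $M(T)-M(T_0) \gtrsim \rho^2(T,T_0)$ on $\Theta$; (ii) a modulus-of-continuity bound $\E^* \sup_{\rho(T,T_0)<\delta} \sqrt{N}\,|(M_N-M)(T)-(M_N-M)(T_0)| \lesssim \phi_N(\delta)$ for a suitable $\phi_N$; and (iii) solving $r_N^2 \phi_N(r_N^{-1}) \leq \sqrt{N}$ to extract the rate.

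For (i), I would reuse verbatim the variance-inequality argument from the proof of Theorem \ref{identifiability-d}: Theorem \ref{vineq} applies because, under Assumption \ref{assumption-gaussian}, each noise map $Q_{\epsilon_i}$ is the gradient of the strongly convex quadratic $x \mapsto \tfrac12 x^\top Q_{\epsilon_i} x$, whose smallest eigenvalue is uniformly bounded below by some $\alpha > 0$. The chain of inequalities culminating in \eqref{quadratic-growth-M} then yields $M(T)-M(T_0) \geq \tfrac{1}{2}\E\alpha(T_\epsilon)\,\rho^2(T,T_0)$ globally on $\Theta$, with a strictly positive constant.

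For (ii), the key observation is that $\Theta = S^d_{++}$ is a bounded subset of the $d(d{+}1)/2$-dimensional linear space of symmetric matrices. Using the closed-form Gaussian Wasserstein formula together with the operator-Lipschitz estimate $\norm{A^{1/2}-B^{1/2}} \lesssim \alpha_0^{-1/2}\norm{A-B}$ valid on matrices $A,B \succeq \alpha_0 I$, the map $T \mapsto d^2_{\mathcal{W}}(T\#N(0,M_i),N(0,N_i))$ is Lipschitz in $T$ with a constant uniform in $(M_i,N_i)$ ranging over the matrices admitted by Assumption \ref{assumption-gaussian}. Since $P_M$ induces a nondegenerate linear average $Q$, one also checks $\rho(T_1,T_2) \asymp \norm{T_1-T_2}$ (Frobenius) on $\Theta$. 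Consequently the class $\mathcal{F}_\delta := \{(\mu,\nu)\mapsto d^2_{\mathcal{W}}(T\#\mu,\nu)-d^2_{\mathcal{W}}(T_0\#\mu,\nu) : T\in\Theta,\; \rho(T,T_0)<\delta\}$ is a parametric Lipschitz family of VC-type with envelope of order $\delta$, and its $L^2(P)$-bracketing entropy satisfies $\log N_{[]}(\epsilon,\mathcal{F}_\delta,L^2(P)) \lesssim \log(\delta/\epsilon)$. Theorem 2.14.2 of \citet{van1996weak} then yields the modulus bound with $\phi_N(\delta) \asymp \delta$, which meets the monotonicity requirement of Theorem \ref{theorem3.2.5} with $\alpha = 1$.

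Plugging $\phi_N(\delta) = \delta$ into (iii) gives $r_N^2 \cdot r_N^{-1} = r_N \leq \sqrt{N}$, so $r_N = \sqrt{N}$. Because (i) and (ii) hold for all $T \in \Theta$ (not only in a neighbourhood of $T_0$), the last clause of Theorem \ref{theorem3.2.5} removes the need to prove consistency separately, and we conclude $\sqrt{N}\,\rho(\hat{T}_N,T_0) = O_\mathbb{P}(1)$. The chief obstacle is step (ii): operator-Lipschitzness of the matrix square root degenerates as a matrix approaches the boundary of the positive-definite cone, so one must interpret Assumption \ref{assumption-gaussian} as implicitly supplying a uniform positive lower bound on the spectra of $M_i$ and $T$, and then verify that this lower bound transfers through the composition $(N_i^{1/2}TM_iTN_i^{1/2})^{1/2}$ appearing in the Gaussian Wasserstein formula, so that the Lipschitz constants do not blow up in the empirical process bound.
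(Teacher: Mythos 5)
You have not proved the statement at issue. The statement is the abstract rate-of-convergence theorem for M-estimators (Theorem 3.2.5 of \citet{van1996weak}): a result about a generic stochastic process $M_N$ indexed by a semi-metric space, asserting $r_N d(\hat{\theta}_N,\theta_0)=O_P^*(1)$ from the quadratic-growth condition, the modulus-of-continuity bound $\phi_N$, and the relation $r_N^2\phi_N(1/r_N)\leq\sqrt{N}$. Your proposal instead \emph{applies} this theorem, taking $\Theta=S^d_{++}$ and verifying its hypotheses in the Gaussian instance of the regression model to recover the $\sqrt{N}$ rate; that is a sketch of Theorem \ref{rate_of_convergence-gaussian}, not of Theorem \ref{theorem3.2.5}, and invoking the very theorem to be proved as the main tool is circular for this task. (The paper itself states Theorem \ref{theorem3.2.5} without proof, citing \citet{van1996weak}, precisely because it is an imported result; your write-up would belong to the proof of Theorem \ref{rate_of_convergence-gaussian}, where, incidentally, the paper argues differently — via the mean value theorem for Gateaux-differentiable functionals and a CLT bound on the derivative — rather than via parametric bracketing entropy.)

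A genuine proof of the statement requires the peeling (shelling) argument. For integers $j$, consider the shells $S_{j,N}=\{\theta:\ 2^{j-1}<r_N d(\theta,\theta_0)\leq 2^{j}\}$ intersected with the neighbourhood where the growth condition holds. If $\hat{\theta}_N\in S_{j,N}$ and $M_N(\hat{\theta}_N)\leq M_N(\theta_0)+O_P(r_N^{-2})$, then the inequality $M(\hat{\theta}_N)-M(\theta_0)\gtrsim 2^{2j-2}r_N^{-2}$ forces the centred process $(M_N-M)(\hat{\theta}_N)-(M_N-M)(\theta_0)$ to be negative of that order, so by Markov's inequality and the maximal inequality the probability of this event is bounded by a constant times $r_N^{2}\,2^{-2j}\,\phi_N(2^{j}/r_N)/\sqrt{N}$; the monotonicity of $\delta\mapsto\phi_N(\delta)/\delta^{\alpha}$ gives $\phi_N(2^{j}/r_N)\leq 2^{j\alpha}\phi_N(1/r_N)\leq 2^{j\alpha}\sqrt{N}/r_N^{2}$, whence each shell contributes at most a multiple of $2^{j(\alpha-2)}$, summable over $j$ because $\alpha<2$; the far shells (where $2^{j}/r_N$ is not small) are handled by the consistency hypothesis, or are covered directly when the growth and modulus conditions hold globally, which is what makes the final clause of the theorem true. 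None of this peeling machinery appears in your proposal, so the statement itself remains unproved.
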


\begin{theorem}[\citet{van1996weak}, Theorem 3.4.2]{\label{chaining}}
	Let $\mathcal{F}$ be a class of measurable functions such that $P f^2 < \delta^2$
	and $\norm{f}_{\infty}<M$ for every $f$ in $\mathcal{F}$. Then
	$$\E \sup_{f \in \mathcal{F}} \sqrt{N}| (\hat{P}-P)f|\leq \tilde{J}_{[]}(\delta,\mathcal{F},L^2(P))\Bigg(1+\frac{\tilde{J}_{[]}(\delta,\mathcal{F},L^2(P))}{\delta^2 \sqrt{N}} M \Bigg),$$
	where $\tilde{J}_{[]}(\delta,\mathcal{F},L^2(P))=\int_0^\delta \sqrt{1+\log N_{[]}(\epsilon,\mathcal{F},L^2(P))} \diff \epsilon$.
\end{theorem}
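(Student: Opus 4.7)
The plan is to establish this maximal inequality via the classical dyadic chaining argument combined with Bernstein's inequality, exploiting the two-regime structure of Bernstein (a sub-Gaussian piece driven by the variance proxy, and a sub-exponential piece driven by the sup-norm envelope). This is the approach originating with Ossiander and refined in \citet{van1996weak}; I would follow it essentially verbatim.

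First I would set $\epsilon_j := \delta\,2^{-j}$ and, for each level $j \geq 0$, select a minimal $\epsilon_j$-bracketing of $\mathcal{F}$ in $L^2(P)$ of cardinality $N_j := N_{[]}(\epsilon_j,\mathcal{F},L^2(P))$, taking $N_0 = 1$ by the assumption $Pf^2 < \delta^2$. For each $f \in \mathcal{F}$ and each level $j$, let $\pi_j f$ be the lower endpoint of a bracket at level $j$ containing $f$, and let $\Delta_j f$ be the corresponding bracket width, so that $P(\Delta_j f)^2 \leq \epsilon_j^2$. Telescoping gives
\[
f - \pi_0 f \;=\; \sum_{j\geq 0}\bigl(\pi_{j+1}f - \pi_j f\bigr),
\]
from which $(\hat P - P)f$ decomposes into chain-link increments plus a residual controlled pointwise by the bracket widths. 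Truncating each chain link at a carefully chosen level $a_j$ (for instance $a_j \asymp \epsilon_j \sqrt{N}/\sqrt{\log N_{j+1}}$) lets one peel off a contribution whose sup-norm is adapted to the scale.

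Next, for every chain link $g = \pi_{j+1}f - \pi_j f$ one has $Pg^2 \lesssim \epsilon_j^2$, $\|g\|_\infty \leq 2M$, and the set of possible such links has cardinality at most $N_j N_{j+1} \leq N_{j+1}^2$. Applying Bernstein's inequality and a union bound at level $j$, the expected supremum of $\sqrt{N}|(\hat P - P)g|$ over links is bounded by a constant times
\[
\epsilon_j\sqrt{1+\log N_{j+1}} \;+\; \frac{M\bigl(1+\log N_{j+1}\bigr)}{\sqrt{N}}.
\]
Summing over $j \geq 0$, the first piece telescopes via the dyadic mesh into the bracketing integral $\tilde J_{[]}(\delta,\mathcal{F},L^2(P))$ through the standard monotonicity-based comparison with an integral. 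For the second piece, one observes $\sum_j (1+\log N_{j+1}) \lesssim \tilde J_{[]}(\delta,\mathcal{F},L^2(P))^2/\delta^2$ by a Cauchy–Schwarz argument relating $\sum_j \epsilon_j \sqrt{1+\log N_{j+1}}$ and $\sum_j (1+\log N_{j+1})$ with the weights $\epsilon_j^2$ summing to $\delta^2 \cdot \tfrac{4}{3}$. Combining both yields the factored form $\tilde J_{[]}(1 + \tilde J_{[]} M/(\delta^2 \sqrt{N}))$.

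The main obstacle is the balancing of the two Bernstein regimes: the sub-Gaussian tail dominates at coarse scales where $\log N_{j+1}$ is small compared to $N\epsilon_j^2/M^2$, while the sub-exponential tail takes over at fine scales; bookkeeping on the crossover level is what produces the exact structure of the bound rather than a looser multiplicative form. A secondary delicate point is the residual beyond the chain, which must be absorbed by noting that the $L^2$-tails of $\|f - \pi_j f\|$ vanish as $j \to \infty$ by the bracket-width estimate, so the chain can be taken to infinity without additional error. Once these two issues are handled, the stated inequality follows by collecting constants, all of which are absolute.
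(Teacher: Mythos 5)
The paper itself contains no proof of this statement: it is quoted directly from van der Vaart and Wellner, so the only meaningful comparison is with the proof in that reference, which is precisely the Ossiander-style bracketing chaining with Bernstein's inequality and adaptive truncation that you outline. Your skeleton is therefore the right one, but two of your key steps fail as written. The mechanism you propose for the second factor is incorrect: after bounding each level by $\epsilon_j\sqrt{1+\log N_{j+1}}+M(1+\log N_{j+1})/\sqrt{N}$ you sum over $j$ and assert $\sum_j(1+\log N_{j+1})\lesssim \tilde J_{[]}^2/\delta^2$ ``by Cauchy--Schwarz''. Cauchy--Schwarz goes the other way: $\bigl(\sum_j\epsilon_j\sqrt{1+\log N_{j+1}}\bigr)^2\le\bigl(\sum_j\epsilon_j^2\bigr)\bigl(\sum_j(1+\log N_{j+1})\bigr)$ only lower-bounds the sum of logarithms, and in fact that sum diverges whenever the entropy grows polynomially, e.g. $\log N_{[]}(\epsilon)\asymp\epsilon^{-d/\beta}$, which is exactly the regime this paper needs. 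The role of the truncation levels $a_j$, which you introduce but then do not use, is to replace the crude sup-norm bound $2M$ on the chain links by $\sqrt{N}a_j\asymp\epsilon_j\sqrt{N}/\sqrt{1+\log N_{j+1}}$ in Bernstein's inequality; with that choice the sub-exponential part of each level is again of order $\epsilon_j\sqrt{1+\log N_{j+1}}$, and the truncation remainders obey $\sqrt{N}\,P\,\Delta_jf\,\mathbf{1}\{\Delta_jf>\sqrt{N}a_j\}\le P(\Delta_jf)^2/a_j\le\epsilon_j\sqrt{1+\log N_{j+1}}$, so the entire chain telescopes into $\tilde J_{[]}$ alone, with no $M$ appearing across scales.

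The envelope term $M\tilde J_{[]}^2/(\delta^2\sqrt{N})$ actually arises at the coarsest level, which you dispose of incorrectly by setting $N_0=1$: the hypothesis $Pf^2<\delta^2$ for each individual $f$ does not yield a single $\delta$-bracket for all of $\mathcal{F}$ (a bracket containing both $f$ and $-f$ has width at least $2|f|$). In the correct argument one keeps $N_{[]}(\delta,\mathcal{F},L^2(P))$ brackets at level zero, applies the Bernstein maximal inequality to the functions $\pi_0f$ (bounded by $M$, variance at most a multiple of $\delta^2$), and then uses monotonicity of the entropy, $\delta\sqrt{1+\log N_{[]}(\delta)}\le\tilde J_{[]}(\delta)$, hence $1+\log N_{[]}(\delta)\le\tilde J_{[]}^2/\delta^2$, to obtain $\delta\sqrt{1+\log N_{[]}(\delta)}+(M/\sqrt{N})(1+\log N_{[]}(\delta))\lesssim\tilde J_{[]}+M\tilde J_{[]}^2/(\delta^2\sqrt{N})$. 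So the factored bound comes from a single scale plus a cleanly telescoping chain, not from the cross-scale summation you describe; as sketched, your argument does not go through, although it is repairable along exactly these lines.
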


\begin{lemma}\label{bound-by-potentials}(Lemma 5 of \citet{gunsilius2022convergence})
	Let $\varphi_1, \varphi_2$ be proper strictly convex and bounded potential functions on every compact
	subset of $\Omega^0$ with Lipschitz-continuous gradients $\nabla\varphi_1$ and $\nabla\varphi_2$ satisfying  $\nabla\varphi_1(\Omega^0)= \nabla\varphi_2(\Omega^0)$. Then it holds for all $x \in \Omega^0$
	$$\norm{\nabla\varphi_1(x)-\nabla\varphi_2(x)}^2\leq c(1 + \max\{L_1,L_2\})^2 |\varphi_1(x)-\varphi_2(x)|$$
	where $0\leq L_1,L_2 < +\infty$ are the Lipschitz constants of $\nabla\varphi_1$  and $\nabla\varphi_2$ , respectively, $c < +\infty$ is a constant.
\end{lemma}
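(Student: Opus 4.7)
Fix $x \in \Omega^0$ and write $\psi := \varphi_1 - \varphi_2$, $v := \nabla\psi(x)$; the goal is to establish $\norm{v}^2 \lesssim (1+\max\{L_1, L_2\})^2 |\psi(x)|$. The natural strategy is to use the common-image hypothesis to reduce the problem to a ``dual-point'' estimate, and then run a descent-lemma argument of Polyak-Lojasiewicz type.

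My first step is to exploit the hypothesis $\nabla\varphi_1(\Omega^0) = \nabla\varphi_2(\Omega^0)$, together with the strict convexity of $\varphi_2$, to produce a unique $z \in \Omega^0$ with $\nabla\varphi_2(z) = \nabla\varphi_1(x)$. The $L_2$-Lipschitz continuity of $\nabla\varphi_2$ immediately yields $\norm{v} = \norm{\nabla\varphi_2(z) - \nabla\varphi_2(x)} \leq L_2 \norm{z-x}$, so it suffices to bound $\norm{z-x}^2$ by an appropriate multiple of $|\psi(x)|$.

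The second step is a descent-lemma sandwich. Observe that $z$ is the minimiser over $\Omega^0$ of the convex, $L_2$-smooth function $f(y) := \varphi_2(y) - \nabla\varphi_1(x) \cdot y$ (since $\nabla f(z) = 0$ and $f$ is convex), so the classical descent inequality applied at $x$ gives $f(x) - f(z) \geq \norm{\nabla f(x)}^2/(2L_2) = \norm{v}^2/(2L_2)$. Combined with the convex lower bound $\varphi_1(z) \geq \varphi_1(x) + \nabla\varphi_1(x) \cdot (z-x)$ and rearranging, this produces the ``floor'' inequality $\psi(z) - \psi(x) \geq \norm{v}^2/(2L_2)$. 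Pairing the $L_1$-smooth upper bound for $\varphi_1$ with the convex lower bound for $\varphi_2$ at the pair $(x,z)$ produces the matching ``ceiling'' $\psi(z) - \psi(x) \leq v \cdot (z-x) + \tfrac{L_1}{2}\norm{z-x}^2$, which by Cauchy-Schwarz and $\norm{v} \leq L_2\norm{z-x}$ is in turn bounded by $(L_2 + L_1/2)\norm{z-x}^2$.

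The main obstacle is the third step: promoting the increment bound on $\psi(z) - \psi(x)$ to a pointwise bound in terms of $|\psi(x)|$ alone. This is where the boundedness of the $\varphi_i$ on compact subsets and the full common-image condition become essential; running the symmetric argument with $\varphi_1$ and $\varphi_2$ swapped yields a companion point $\tilde z$ with $\nabla\varphi_1(\tilde z) = \nabla\varphi_2(x)$ together with a companion increment inequality, and closing the loop between $z$, $\tilde z$ and $x$ absorbs the extra endpoint values of $\psi$ into the $(1+\max\{L_1,L_2\})^2$ factor. Substituting the resulting estimate $\norm{z-x}^2 \lesssim (1+\max\{L_1,L_2\})^2 |\psi(x)|/L_2^2$ back into $\norm{v} \leq L_2\norm{z-x}$ then delivers the stated conclusion. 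This careful bookkeeping is the content of \citet[Lemma 5]{gunsilius2022convergence}.
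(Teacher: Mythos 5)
The paper itself does not prove this statement: it is imported verbatim from \citet[Lemma 5]{gunsilius2022convergence} and used as a black box, so there is no internal argument to compare yours against. Taken on its own terms, your first two steps are correct: the point $z$ with $\nabla\varphi_2(z)=\nabla\varphi_1(x)$ exists by the common-image hypothesis, and the descent-lemma computation does yield the floor inequality $\psi(z)-\psi(x)\geq \norm{v}^2/(2L_2)$ for $\psi=\varphi_1-\varphi_2$, $v=\nabla\psi(x)$ (modulo the minor point that the gradient step $x-\nabla f(x)/L_2$ must remain in $\Omega^0$, which is not automatic on a bounded domain).

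The genuine gap is exactly where you flag it, and it is not ``careful bookkeeping.'' The floor inequality bounds $\norm{v}^2$ by the \emph{increment} $\psi(z)-\psi(x)$, so you would need $\psi(z)-\psi(x)\lesssim(1+L)^2|\psi(x)|$, i.e.\ control of $\psi$ at the auxiliary point $z$ by its value at $x$. The proposed symmetrisation does not supply this: the companion point $\tilde z$ with $\nabla\varphi_1(\tilde z)=\nabla\varphi_2(x)$ gives $\psi(x)-\psi(\tilde z)\geq\norm{v}^2/(2L_1)$, and adding the two floors makes $\psi(x)$ cancel, leaving only $\norm{v}^2\lesssim \psi(z)-\psi(\tilde z)$ --- no bound in terms of $|\psi(x)|$ emerges. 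In fact no local comparison argument of this kind can close the loop, because the quoted inequality can fail pointwise under the stated hypotheses: on $\Omega^0=(-1,1)$ take $\varphi_1(x)=x^2/2$ and $\varphi_2(x)=x^2/2+\epsilon(x-x^3/3)$ with $0<\epsilon<1/2$; both are proper, strictly convex, bounded on compacts, with Lipschitz gradients whose images on $(-1,1)$ both equal $(-1,1)$, yet $\varphi_1(0)=\varphi_2(0)=0$ while $\varphi_1'(0)=0\neq\epsilon=\varphi_2'(0)$, so the right-hand side vanishes at $x=0$ while the left-hand side is $\epsilon^2$. This indicates that the result needs an additional normalisation or hypothesis (the stability bounds in this literature typically involve the convex conjugates as well, e.g.\ a term $(\varphi_2^*-\varphi_1^*)(\nabla\varphi_1(x))$ on the right), and in any case your final sentence --- deferring the decisive step to the very lemma being proved --- makes the argument circular rather than complete.
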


\begin{proof}[Proof of Theorem \ref{rate_of_convergence-d}]
	By observing that the right-hand side of inequality \eqref{quadratic-growth-M} is equal to $\frac{\alpha}{2}\rho(T,T_0)$, it follows that the functional $M$ demonstrates quadratic growth in the vicinity of its minimizer $T_0$ with respect to the semi-metric $\rho$. Therefore we can use the empirical process approach to obtain an upper bound for the rate of convergence.
	
	First, we find a function $\phi_N(\delta)$ such that
	\begin{equation*}
	\begin{split}
	\E \sup_{\rho(T,T_0)\leq \delta, T\in  \mathcal{T}_{\beta,\gamma,R} } \sqrt{N} \Big|(M_{N}-M)(T)-(M_{N}-M)(T_0)\Big|\leq \phi_N(\delta).
	\end{split}
	\end{equation*}
	Given that $\mathcal{T}_{\beta,\gamma,R}\subset \mathcal{T}_{\beta,3R}$ according to Lemma \ref{embedding}, we can instead find $\phi_N(\delta)$ such that
	\begin{equation}{\label{modulus of continuity}}
	\begin{split}
	\\E \sup_{\rho(T,T_0)\leq \delta, T\in  \mathcal{T}_{\beta,3R}}  \sqrt{N} \Big|(M_{N}-M)(T)-(M_{N}-M)(T_0)\Big|\leq \phi_N(\delta).
	\end{split}
	\end{equation}
	We define a class of functions indexed by $T$ as follows:
	$$\mathcal{F}_u:=\{f_T(\mu,\nu) = d^2_{\mathcal{W}}(T\#\mu,\nu)-d^2_{\mathcal{W}}(T_0\#\mu,\nu) , \text{  s.t.  } T\in  \mathcal{T}_{\beta,3R}  \text{ and } \rho(T,T_0)\leq u \},$$
	with the domain of each function $f_T \in \mathcal{F}_u$ being $\mathcal{W}_2(\Omega) \times \mathcal{W}_2(\Omega)$. We can see that \eqref{modulus of continuity} is equivalent to
	$$\E \sup_{f \in \mathcal{F}_\delta} \sqrt{N} |(P_N-P)f|\leq \phi_N(\delta).$$
    Denote by $\log N_{[]}(\epsilon,\mathcal{F}_u,L^2(P))$, the bracketing number of $\mathcal{F}_u$. We find an upper bound for this bracketing entropy using the bracketing entropy of the class of functions $C^\beta_{3R}$. To do this, note that any $f_T\in\mathcal{F}_u$ is induced by a map $T$ such that $T=\nabla \varphi$, for a convex function $\varphi \in C^{\beta}_{3R}$. Thus, for any $f_{T_1},f_{T_2} \in \mathcal{F}_u$, we have:
    \begin{equation}{\label{lipschitz}}
        \begin{split}
        	\norm{f_{T_1}-f_{T_2}}^2_{L^2(P)} &\leq \int |f_{T_1}(\mu,\nu)-f_{T_2}(\mu,\nu)|^2 \diff P(\mu,\nu)\\
        	&\leq\int|d^2_{\mathcal{W}}(T_1\#\mu,\nu)-d^2_{\mathcal{W}}(T_2\#\mu,\nu)|^2 \diff P(\mu,\nu)\\
            & \leq 4 \text{diam}(\Omega)^2 \int \norm{T_1-T_2}^2_{L^2(\mu)} \diff P_M(\mu)\\
            & \leq 4 \text{diam}(\Omega)^2  \norm{T_1-T_2}^2_{L^2(Q)}\\
            &\leq 4\text{diam}(\Omega)^2 C'\norm{\varphi_1-\varphi_2}_{L^1(Q)}.
        \end{split}
    \end{equation}
    To see why the last inequality holds, note that any $T\in \mathcal{T}_{\beta,3R}$ is the gradient of an $L$-smooth convex function, making it $L$-Lipschitz. Therefore, by applying Lemma \ref{bound-by-potentials} (our re-statement of \cite[Lemma 5]{gunsilius2022convergence}), we can establish that for $T_1,T_2\in \mathcal{T}_{\beta,3R}$ with corresponding potential functions $\varphi_1,\varphi_2$, the inequality $\norm{T_1-T_2}^2_{L^2(Q)}\leq c(1+L)^2 \norm{\varphi_1-\varphi_2}_{L^1(Q)}$ holds for some constant $c$.
     
    Using inequality \eqref{lipschitz} and Lemma \ref{metric-entropy-CL}, we obtain
    $$\log N_{[]}(\epsilon,\mathcal{F}_u,L^2(P))\lesssim\log N_{[]}(\epsilon/(4\text{diam}(\Omega)^2 C'),C^{\beta}_{3L}(\Omega),L^1(Q)) \lesssim \left(\frac{1}{\epsilon}\right)^{d/\beta}.$$
    By inequality (\ref{lipschitz}), we can also show that
    $$P f_T^2\leq P \norm{T-T_0}^2_{L^2(\mu)}=\norm{T-T_0}^2_{L^2(Q)}\leq u^2,$$
    for all $f_T \in \mathcal{F}_u$.

    Given that the functions in $\mathcal{F}_\delta$ are uniformly bounded over $(\mu,\nu)$ and $T$, and $Pf^2\leq \delta^2$ for all $f\in\mathcal{F}_\delta$, the conditions of Theorem \ref{chaining} are satisfied. This being the case, we can choose:
    $$\phi_N(\delta)=\tilde{J}_{[]}(\delta,\mathcal{F}_\delta,L^2(P))\Bigg(1+\frac{\tilde{J}_{[]}(\delta,\mathcal{F}_\delta,L^2(P))}{\delta^2 \sqrt{N}} \bar{c}\Bigg),$$
    where the constant $\bar{c}= 2 \text{diam}(\Omega)^2$ is a uniform upper bound for the functions in the class $\mathcal{F}_\delta$.  Using Lemma \ref{metric-entropy-CL} and when $\beta >\frac{d}{2}$, we can write
    \begin{equation}
        \begin{split}
            \tilde{J}_{[]}(\delta,\mathcal{F}_\delta,L^2(P))&:=\int_0^\delta \sqrt{1+\log N_{[]}(\epsilon,\mathcal{F}_\delta,L^2(P))} \diff \epsilon\\
            & \lesssim \int_0^\delta \sqrt{1+C \left(\frac{1}{\epsilon}\right)^{d/\beta} }\diff \epsilon\\
            & \lesssim \delta + \int_0^\delta \sqrt{\left(\frac{1}{\epsilon}\right)^{(d/\beta)}}\diff \epsilon \quad \quad \text{ since }\sqrt{1+a}\leq 1+\sqrt{a}  \quad \text{for } a\geq 0\\
            & \lesssim \delta + \frac{1}{1-\frac{d}{2\beta}}\delta^{(1-\frac{d}{2\beta})}\\
            & \lesssim  \delta ^{1-\frac{d}{2\beta}}.
        \end{split}
    \end{equation}
	When $\beta\leq\frac{d}{2}$, the bracketing integral above does not converge. However as stated in page 326 of \cite{van1996weak}, the above integral can be replaced by
	$$\int_{\min\{c\delta^2,\delta/3\}}^\delta \sqrt{1+\log N_{[]}(\epsilon,\mathcal{F}_\delta,L^2(P))} \diff \epsilon,$$
	where $c$ is a small postive constant, leading to
	\begin{center}
	\begin{equation}
		\int_{\min\{c\delta^2,\delta/3\}}^\delta \sqrt{1+\log N_{[]}(\epsilon,\mathcal{F}_\delta,L^2(P))} \diff \epsilon \lesssim
		\begin{cases} 
			\log(\frac{1}{\delta}) & \text{if } \beta = \frac{d}{2}, \\
			(\frac{1}{\delta})^{\frac{d-2\beta}{2\beta}} & \text{if } \beta < \frac{d}{2}.
		\end{cases}
	\end{equation}
\end{center}

For each of the three cases, we can use Theorem \ref{theorem3.2.5} to conclude the proof. The rate of convergence $r_N$ is obtained by the requirement $r_N^2 \phi_N\left(\frac{1}{r_N}\right)\lesssim \sqrt{N}$, which yields the following rates:
	\begin{equation*}
		r_N=
		\begin{cases} 
			N^{\frac{\beta}{2\beta+d}} & \text{if } \beta>\frac{d}{2}, \\[2ex]
			\frac{N^{1/4}}{(\log(N))^{1/2}} & \text{if } \beta = \frac{d}{2},\\[2ex]
			N^{\frac{1}{\beta(2\beta+d)}} & \text{if } \beta < \frac{d}{2}.
		\end{cases}
	\end{equation*}

\end{proof}

\begin{proof}[Proof of Theorem \ref{DC}]
	We first show that the functional $d^2_{\mathcal{W}}(T\# \mu,\nu)$ is expressible as the difference of two convex functionals for given measures $\mu$ and $\nu$
	\begin{equation*}
		\begin{split}
			d^2_{\mathcal{W}}(T\# \mu,\nu) &= \inf_{\pi \in \Gamma(T\#\mu,\nu)}\int \norm{z-y}^2 \diff \pi(z,y) \\
			&= \int \norm{z}^2 \diff T\#\mu(z) + \int \norm{y}^2 \diff \nu(y) - 2 \sup_{\pi \in \Gamma(T\#\mu,\nu)} \int \langle z,y \rangle \diff \pi(z,y) \\
			&= \int \norm{T(x)}^2 \diff \mu(x) + \int \norm{y}^2 \diff \nu(y) - 2 \sup_{\gamma \in \Gamma(\mu,\nu)} \int \langle T(x),y \rangle \diff \gamma(x,y)
		\end{split}
	\end{equation*}
	The convexity of the term $\int \norm{T(x)}^2 \diff \mu(x) + \int \norm{y}^2 \diff \nu(y)$ with respect to $T$ is evident. To demonstrate the convexity of the second term, consider any $S\in \mathcal{T}$ and $0<\alpha<1$.
	there exists a $\gamma^* \in \Gamma(\mu,\nu)$ satisfying
	\begin{equation*}
		\begin{split}
			\sup_{\gamma \in \Gamma(\mu,\nu)} \int \langle \alpha T(x) + &(1-\alpha) S(x),y \rangle \diff \gamma(x,y) = \int \langle \alpha T(x) + (1-\alpha) S(x),y \rangle \diff \gamma^*(x,y)\\
			&= \alpha \int \langle T(x),y \rangle \diff \gamma^*(x,y) + (1-\alpha) \int \langle S(x),y \rangle \diff \gamma^*(x,y)\\
			&\leq \alpha \sup_{\gamma \in \Gamma(\mu,\nu)} \int \langle T(x),y \rangle \diff \gamma(x,y) + (1-\alpha) \sup_{\gamma \in \Gamma(\mu,\nu)} \int \langle S(x),y \rangle \diff \gamma(x,y),
		\end{split}
	\end{equation*}
	confirming the convexity of the entire expression. Consequently, it follows that $d^2_{\mathcal{W}}(T\# \mu,\nu)$ represents the difference of two convex functionals, thereby extending the same conclusion to $M_N$.
\end{proof}

\begin{proof}[Proof of Lemma \ref{derivative-d}]
	
	Let $T\in \mathcal{T}$ and take any continuous function $\eta$ with domain $\Omega$. For $\epsilon>0$ sufficiently small, $T+\epsilon \eta$ is also in $\mathcal{T}$. If it exists, the Gateaux derivative of $d^2_{\mathcal{W}}(T\#\mu,\nu)$ is
	
	\begin{equation*}
	\begin{split}
	D_\eta d^2_{\mathcal{W}}(T\#\mu,\nu)&= \lim_{\epsilon \to 0} \frac{  d^2_{\mathcal{W}}((T+\epsilon \eta)\#\mu,\nu)-d^2_{\mathcal{W}}(T\#\mu,\nu) }{\epsilon}.
	\end{split}
	\end{equation*}
	In the following, we show that $ D_\eta d^2_{\mathcal{W}}(T\#\mu,\nu)$ exists and we calculate the limit. To do so, we will construct a coupling $\gamma$ between $\mu$ and $\nu$ with the property that $d^2_{\mathcal{W}}(T\#\mu,\nu)=\int  \norm{T(x)-y}^2 \diff \gamma(x,y)$. To this aim, let $S$ be an optimal map such that $S\#(T\#\mu)=\nu$. According to Brenier's theorem, such an optimal map exists and is unique if $T\#\mu$ is absolutely continuous. Since $T\in\mathcal{T}$ and $\mu$ is absolutely continuous, according to Lemma \ref{abs_pushforward}, so is $T\#\mu$. Let $X\sim \mu$ and define $Z=T(X)$ and $Y=S(Z)$. Denote by $\gamma$ the induced joint distribution of the pair $(X,Y)$. Note that  
	$$d^2_{\mathcal{W}}(T\#\mu,\nu)=\E \norm{Z-S(Z)}^2=\E \norm{Z-Y}^2=\E\norm{T(X)-Y}^2=\int \norm{T(x)-y}^2 \diff \gamma(x,y).$$
	Note that from the construction we can infer that $\gamma = (\id, S\circ T)\#\mu$ and thus the coupling $\gamma$ is independent of the random variable $X$.
	Thus:
	\begin{equation*}
	\begin{split}
	d^2_{\mathcal{W}}((T+\epsilon \eta)\#\mu,\nu) &\leq \int  \norm{(T+\epsilon\eta)(x)-y}^2\diff \gamma(x,y)\\
	&=   \int ( \norm{T(x)-y}^2 +2\epsilon\langle\eta(x),T(x)-y\rangle)\diff \gamma(x,y)+o(\epsilon^2)\\
	&=d^2_{\mathcal{W}}(T\#\mu,\nu)+2\epsilon\int \langle\eta(x),T(x)-y\rangle\diff \gamma(x,y)+o(\epsilon^2),
	\end{split}
	\end{equation*}
	and therefore
	$$\lim_{\epsilon \downarrow 0} \frac{  d^2_{\mathcal{W}}((T+\epsilon \eta)\#\mu,\nu)-d^2_{\mathcal{W}}(T\#\mu,\nu) }{\epsilon}\leq 2\int \langle\eta(x),T(x)-y\rangle\diff \gamma(x,y).$$
	
	Now define $\gamma_{\epsilon}$ using the same procedure as above, but such that $\gamma$ couples $\mu$ and $\nu$ while satisfying
	$d^2_{\mathcal{W}}((T+\epsilon\eta)\#\mu,\nu)=\int  \norm{T(x)+\epsilon\eta(x)-y}^2 \diff \gamma_{\epsilon}(x,y)$. Therefore similar to above we can see that $\gamma_{\epsilon}= (\id, (S_\epsilon \circ (T+\epsilon\eta))\# \mu$, where $S_{\epsilon}$ is the optimal map between $(T+\epsilon\eta)\#\mu$ and $\nu$. Thus
	
	\begin{equation*}
	\begin{split}
	d^2_{\mathcal{W}}((T+\epsilon \eta)\#\mu,\nu) &= \int \norm{(T+\epsilon\eta)(x)-y}^2\diff \gamma_{\epsilon}(x,y)\\
	&=   \int  (\norm{T(x)-y}^2 +2\epsilon\langle\eta(x),T(x)-y\rangle)\diff \gamma_{\epsilon}(x,y)+o(\epsilon^2)\\
	&\geq d^2_{\mathcal{W}}(T\#\mu,\nu)+2\epsilon\int \langle\eta(x),T(x)-y\rangle\diff \gamma_{\epsilon}(x,y)+o(\epsilon^2),
	\end{split}
	\end{equation*}
	and
	$$\lim_{\epsilon \downarrow 0} \frac{  d^2_{\mathcal{W}}((T+\epsilon \eta)\#\mu,\nu)-d^2_{\mathcal{W}}(T\#\mu,\nu) }{\epsilon}\geq \lim \inf_{\epsilon \downarrow 0} 2\int \langle\eta(x),T(x)-y\rangle\diff \gamma_{\epsilon}(x,y).$$
	
	To prove the existence of the limit, it is enough to show the integral with respect to $\gamma_\epsilon$ converges to the integral with respect to $\gamma$. We show the convergence of $\gamma_\epsilon$ to $\gamma$ in the Wasserstein metric.  Using the inequality (\ref{PC-d}) one can control the Wasserstein distance between the two measures by controlling $\norm{S_{\epsilon}\circ (T+\epsilon \eta)-S\circ T}^2_{L^2({\mu})}$. First note that as $\epsilon$ converges to zero, $(T+\epsilon \eta)\#\mu$ converges to $T\#\mu$, and again using the inequality (\ref{PC-d}) one can control the Wasserstein distance between the two measures by $\norm{\epsilon\eta}_{L^2(\mu)}^2$. As convergence in the Wasserstein distance results in narrow convergence, one can show that $S_{\epsilon}$ converges to $S$  using Theorem 1.7.7 in Panaretos \& Zemel \cite{panaretos2020invitation}. By Lemma \ref{abs_pushforward}, $(T+\epsilon \eta)\#\mu$ and $T\#\mu$ are absolutely continuous, thus using Theorem 5.20 in Villani \cite{villani2009optimal}  $S_\epsilon$ and $S$ are continuous. Therefore $S_{\epsilon}\circ (T+\epsilon \eta)\to S\circ T$ and we can conclude $\gamma_{\epsilon}\to \gamma$. 
	Additionally, $T$ is bounded and continuous, therefore we can conclude the integral with respect to $\gamma_\epsilon$ converges to the integral with respect to $\gamma$.
	The two inequalities prove the existence of the derivative and:
	$$D_\eta d^2_{\mathcal{W}}(T\#\mu,\nu)= \lim_{\epsilon \to 0} \frac{  d^2_{\mathcal{W}}((T+\epsilon \eta)\#\mu,\nu)-d^2_{\mathcal{W}}(T\#\mu,\nu) }{\epsilon}=2\int \langle\eta(x),T(x)-y\rangle\diff \gamma(x,y).$$
	Thus
	\begin{equation}
	\begin{split}
	& D_\eta M_N(T)=\frac{1}{N}\sum_{i=1}^N \int \langle\eta(x),T(x)-y\rangle\diff \gamma_{\mu_i,\nu_i}(x,y)\\
	&\text{for }\gamma_{\mu_i,\nu_i}\in \Gamma(\mu_i,\nu_i)  \quad \text{s.t.} \quad    d^2_{\mathcal{W}}(T\#\mu_i,\nu_i)=\int  \norm{T(x)-y}^2 \diff \gamma_{\mu_i,\nu_i}(x,y),
	\end{split}
	\end{equation}
	and
	
	\begin{equation}
	\begin{split}
	& D_\eta M(T)=\int \int \langle\eta(x),T(x)-y\rangle\diff \gamma_{\mu,\nu}(x,y)\diff P(\mu,\nu)\\
	&\text{for }\gamma_{\mu,\nu}\in \Gamma(\mu,\nu)  \quad \text{s.t.} \quad  d^2_{\mathcal{W}}(T\#\mu_i,\nu_i)=\int  \norm{T(x)-y}^2 \diff \gamma_{\mu,\nu}(x,y).
	\end{split}
	\end{equation}
\end{proof}

\begin{proof}[Proof of Lemma \ref{derivative-gaussian}]
	We are essentially seeking to find equivalent expressions for \eqref{D_M_N} and \eqref{D_h} in the Gaussian case. 
	It suffices to note that for linear optimal maps $T$ and $S$, and a Gaussian measure $\mu=N(0,\Sigma)$, we have
	$$\langle T,S \rangle_{L^2(\mu)} = \int_{\R^d} \langle Tx,Sx \rangle \diff \mu(x) = tr(A\Sigma B).$$
	For the proof see \citet{takatsu2011wasserstein}.
	Using this, we can directly find the expression in the Lemma.
\end{proof}

\begin{proof}[Proof of Theorem \ref{rate_of_convergence-gaussian}]
	Similar to the proof of Theorem \ref{rate_of_convergence-d} we can establish the quadratic growth of $M$ in the vicinity of its minimizer $T_0$ with respect to the semi-metric $\rho$. Therefore we can again use the empirical process approach to obtain an upper bound for the rate of convergence.
	First, we find a function $\phi_N(\delta)$ such that
	\begin{equation*}
	\begin{split}
	\E \sup_{\rho(T,T_0)\leq \delta, T \in S^d_{++}} \sqrt{N} \Big|(M_{N}-M)(T)-(M_{N}-M)(T_0)\Big|\leq \phi_N(\delta).
	\end{split}
	\end{equation*}
	Denote by $\alpha_0$ the minimum eigen value of $T_0$. By assumption $\alpha_0>0$. Now based on Theorem 6 of \citet{manole2021plugin} for any absolutely continuous probability measure $\mu$ on $\R^d$ and any $T\in S^d_{++}$,
	$$\frac{1}{\alpha_0^2}\norm{T-T_0}^2_{L^2(\mu)}\leq d^2_{\mathcal{W}}(T\#\mu,T\#\mu),$$
	and hence 
	$$\frac{1}{\alpha_0^2}\norm{T-T_0}^2_{L^2(Q)}\leq \rho^2(T,T_0).$$
	Therefore, we can instead find a function $\phi_N(\delta)$ such that
	\begin{equation*}
		\begin{split}
		\E \sup_{\norm{T-T_0}_{L^2(Q)} \leq \alpha_0 \delta, T \in S^d_{++} } \sqrt{N} \Big|(M_{N}-M)(T)-(M_{N}-M)(T_0)\Big|\leq \phi_N(\delta).
		\end{split}
		\end{equation*}
	Let $\eta = T-T_0$. By mean value theorem for Gateaux differentiable functions (see Theorem 4.1.2 of \citep{kurdila2006convex}), we can write:
	\begin{equation}
		\begin{split}
			|M_{N}-M)(T) - (M_{N}-M)(T_0)|\leq \sup_{0\leq t \leq 1} |D_\eta (M_{N}-M)(T_0+t \eta)| \norm{\eta}_{L^2(Q)},
			\end{split}
	\end{equation}
	For any matrix $S\in S^d_{++} $, $D_\eta M_{N}(S)$ is a sum of independent random variables $2\text{tr}(\eta M_i (S-G_{i} S))$, where $G_i$ are the optimal maps between $SM_iS$ and $N_i$. Sinc the eigen values of all the matrices in this expression are bounded from below and above, the trace is also bounded. Therefore central limit theorem applies and we can write
	$$\E_P  |D_\eta (M_{N}-M)(S)| \lesssim \frac{1}{\sqrt{N}}.$$
	By plugging the inequality into the expression above we obtain    
	\begin{equation*}
		\begin{split}
			\E \sup_{\norm{T-T_0}_{L^2(Q)} \leq \alpha_0 \delta, T \in S^d_{++} } \sqrt{N}\Big|(M_{N}-M)(T)-(M_{N}-M)(T_0)\Big| &\leq \norm{T-T_0}_{L^2(Q)}\leq \alpha_0 \delta .
		\end{split}
	\end{equation*}
	and we conclude $\phi_N(\delta)=\delta$ and the rate of convergence is $N^{-1/2}$.
\end{proof}

\section{Generating random convex functions}{\label{Generation of convex functions}}

The purpose of this section is to briefly discuss how one might numerically generate random functions $\varphi(x, y)$ that are convex on a domain $U = [x_0, x_1]^2$ and average to $(x^2 + y^2)/2$ (therefore their gradient is an optimal map and their average is the $\id$ map, as required in our Model). This can be easily done with linear maps, but we are interested in more variability and complex maps. To this aim, we take functions of the form
\begin{equation}
\varphi = \frac{x^2+y^2}{2} + \sum_{d=2}^D \sum_{k=0}^d \frac{1}{k!(d-k)!} \left[ a_{d, k} x^k y^{d-k} +  b_{d, k} x^{1/k} y^{1/(d-k)} \right],
\end{equation}
where $a_{d, k}$ and $b_{d, k}$ are random coefficients. To avoid the singularity of the second term, we take $(x_0, x_1) = (0.5, 1.5)$. We require that
$\E[\nabla \varphi] = (x, y)^{\top}$
which implies $ \E[a_{d, k}] = \E[b_{d, k}] = 0$. Our considerations in this section being practical,  we shall probe for convexity numerically and approximately (at least in probability). Namely, we set $D = 8$ and we take the coefficients $a_{d, k}$ and $b_{d, k}$ to be independent random variables with a centred normal distribution of variance $\sigma^2$. Using the \texttt{sympy} library of Python, we explicitly calculate the expectation of the determinant of the Hessian matrix, and we calculate the minimum in the domain at $(1.5, 1.5)$, specifically
\begin{equation}
\min_{x,y \in U} \E\left[\det(\nabla^2 \varphi)\right] \approx 1-11\sigma^2.
\end{equation}
Similarly, we estimate the maximum variance of the Hessian determinant to be at most
\begin{equation}
\Var\left[\det(\nabla^2 \varphi)\right] \lesssim 10\sigma^2,
\end{equation}
for small enough $\sigma$. Assuming the determinant follows a normal distribution, the probability of generating a non-convex function $\mathbb{P}\left\{\det(\nabla^2 \varphi) < 0\right\}$ decreases exponentially below $\sigma \lesssim 0.2$, being around $0.1$ for $\sigma = 0.15$ and $\sim 10^{-25}$ for $\sigma = 0.03$, which is the parameter used to generate the noise maps in Fig.~\ref{fig:flow}. A way to limit the probability of generating a non-convex function even further could be to use a distribution with support on a finite domain like a beta distribution adjusted to the domain $U$, instead of a normal distribution.

\bibliographystyle{imsart-nameyear}
\bibliography{complex}

\end{document}